\documentclass[reqno, 11pt]{amsart}
\usepackage[utf8x]{inputenc}
\usepackage[dvips]{epsfig}
\usepackage{amsgen, amstext,amsbsy,amsopn,amssymb, amsthm, mathptmx, amsfonts,amssymb,
amscd,amsmath,nicefrac,euscript,enumerate,url,verbatim,calc,framed}
\usepackage[all]{xy}

\DeclareMathOperator{\grade}{grade}

\DeclareMathOperator{\Rees}{Rees}
\DeclareMathOperator{\reg}{reg} 
\DeclareMathOperator{\Tor}{Tor} 
\DeclareMathOperator{\tensor}{\otimes}

\newtheorem{thm}{\bf Theorem}[section]
\newtheorem{lem}[thm]{\bf Lemma}
\newtheorem{cor}[thm]{\bf Corollary}
\newtheorem{prop}[thm]{\bf Proposition}

\newtheorem{quest}[thm]{\bf Question}

\theoremstyle{definition}
\newtheorem{defn}[thm]{\bf Definition}

\newtheorem{rem}[thm]{\bf Remark}

\newtheorem{ex}[thm]{\bf Example}

\newtheorem{notation}[thm]{\bf Notation}

\title{{K}oszul property of diagonal subalgebras}

\author{Neeraj~Kumar} 
\address{Dipartimento di Matematica, Universit\'{a} di Genova,\\
Via Dodecaneso 35, 16146 Genova, Italy}
\email{kumar@dima.unige.it}

\thanks{{\it Key words:} Koszul algebra, Diagonal subalgebra, Rees algebra, Complete intersection
\endgraf
{\it 2000 Mathematics Subject Classification:} Primary $13D02$, Secondary $13S37$}

\begin{document}

\begin{abstract}
Let $S=K[x_1,\dots,x_n]$ be a polynomial ring over a field $K$ and $I$ a 
homogeneous ideal in $S$ generated by a regular sequence $f_1,f_2, \dots,f_k$ of 
homogeneous forms of degree $d$. We study a generalization of a result of Conca, 
Herzog, Trung, and Valla \cite{CHTV} concerning Koszul property of the diagonal 
subalgebras associated to $I$. Each such subalgebra has the form $K[(I^e)_{ed+c}]$, where 
$c,e \in \mathbb{N}$. For $k=3$, we extend \cite[Corollary $6.10$]{CHTV} 
by proving that $K[(I^e)_{ed+c}]$ is Koszul as soon as $c \geq \frac{d}{2}$ and $e >0$. 
We also extend \cite[Corollary $6.10$]{CHTV} in another direction by 
replacing the polynomial ring with a Koszul ring. 
\end{abstract}

\maketitle

\section{Introduction}
Let $S=K[x_1,\dots,x_n]$ be a polynomial ring over a field $K$ and 
$I$ a homogeneous ideal in $S$. For large $c$, the algebra $K[(I^e)_{c}]$ is 
isomorphic to the coordinate ring of some embedding of the blow up of 
$\mathbb{P}_{K}^{n-1}$ along the ideal sheaf $\tilde{I}$ in a projective space \cite{CHTV}.

Let Rees$(I)=\bigoplus_{j \geq 0}I^jt^j$ be the \emph{Rees algebra} of $I$. 
Since the polynomial ring $S[t]$ is a bigraded algebra with $S[t]_{(i,j)}=S_it^j$, 
we may consider Rees$(I)$ as a bigraded subalgebra of $S[t]$ with 
Rees$(I)_{(i,j)}=(I^j)_it^j$.

Let $c$ and $e$ be positive integers. Let 
$\bigtriangleup = \{ (cs,es) : s \in \mathbb{Z} \}$. We call 
$\bigtriangleup$ the \emph{$(c,e)$-diagonal} of $\mathbb{Z}^2$ \cite{CHTV}. 
Let $R=\oplus_{(i,j) \in \mathbb{Z}^2}R_{(i,j)}$ be a bigraded algebra, where $R_{(i,j)}$ 
denotes the $(i,j)$-th bigraded component of $R$. The $(c,e)$\emph{-diagonal subalgebra} of $R$ is 
defined as the $\mathbb{Z}$-graded algebra 
$R_{\bigtriangleup}=\bigoplus_{s \in \mathbb{Z}} R_{(cs,es)}$. Similarly, for every bigraded 
$R$-module $M$, one defines the $(c,e)$\emph{-diagonal submodule} of $M$ 
as $M_{\bigtriangleup}=\bigoplus_{s \in \mathbb{Z}} M_{(cs,es)}$. Notice that $M_{\bigtriangleup}$ 
is a module over $R_{\bigtriangleup}$. 

When $I$ is a homogeneous ideal in $S$ generated by $f_1,f_2, \dots,f_k$, of homogeneous 
forms of degree $d$, then Rees$(I)$ is a standard bigraded algebra by setting $\deg x_i=(1,0)$ and 
$\deg f_jt=(0,1)$. We observe that $K[(I^e)_{ed+c}]$ is 
the $(c,e)$-diagonal subalgebra of Rees$(I)$. We may also view $K[(I^e)_{ed+c}]$ 
as a $K$-subalgebra of $S$ generated by the forms of degree $ed+c$ in the ideal $I^e$.

\medskip

Given a field $K$, a positively graded $K$-algebra $A=\bigoplus_{i \in \mathbb{N}} A_i$ 
with $A_0=K$ is \emph{Koszul} if the field $K$, viewed as an $A$-module via the 
identification $K=A/A_{+}$, has a linear free resolution. Koszul algebras were introduced by 
Priddy \cite{Priddy} in $1970$. During the last four decades Koszul algebra have been
 studied in various contexts. Good survey on Koszul algebra is given by Fr\"oberg in 
 \cite{Froberg} during nineties and recently by Conca, De Negri and Rossi in \cite{CNR}.

Diagonal subalgebras have been studied intensively by several authors (e.g see \cite{CHTV}, \cite{STG}, 
\cite{ACN} ) because they naturally appear in Rees algebras and symmetric algebras. 
In \cite{CHTV} Conca, Herzog, Trung and Valla discuss some algebraic properties of diagonal 
subalgebras, such as Cohen-Macaulayness and Koszulness. In \cite{KSSW} Kurano et al. 
showed that Cohen-Macaulayness property holds in \cite{CHTV} 
even if the polynomial ring is replaced by a Cohen-Macaulay ring of dimension $d \geq 2$. 
In this article, we study the Koszul property of certain diagonal subalgebras 
of bigraded algebras, with applications to diagonals of Rees algebras. We generalize 
some of the important results of \cite{CHTV} regarding the Koszulness of certain 
diagonal subalgebras of bigraded algebras. 

\medskip

For any homogeneous ideal $I$, there exists integers $c_0,\;e_0$ such that the $K$-algebra 
$K[(I^e)_{ed+c}]$ is Koszul for all $c \geq c_0$ and $e \geq e_0$, see \cite[Corollary $6.9$]{CHTV}. 
If $I$ is a complete intersection ideal generated by $f_1,f_2, \dots,f_k$, of homogeneous forms of degree $d$, 
then the $K$-algebra $K[(I^e)_{ed+c}]$ is quadratic if $c \geq \frac{d}{2}$ and $e >0$; furthermore 
$K[(I^e)_{ed+c}]$ is Koszul if $c \geq \frac{d(k-1)}{k}$ and $e>0$, see \cite[Corollary $6.10$]{CHTV}. 

\medskip

The main results of this paper are the following:
\begin{itemize}
\item[(i)] Let $I$ be an ideal of the polynomial ring $K[x_1,\dots,x_n]$ generated by a regular sequence 
 $f_1,f_2, f_3$, of homogeneous forms of degree $d$. Then $K[(I^e)_{ed+c}]$ is 
 Koszul for all $c \geq \frac{d}{2}$ and $e >0$. 
 \item[(ii)] Let $A$ be a standard graded Koszul ring. Let $I$ be an ideal of $A$ generated by a 
regular sequence $f_1,f_2, \dots,f_k$, of homogeneous forms of degree $d$. Then $K[(I^e)_{ed+c}]$ is 
 Koszul for all $c \geq \frac{d(k-1)}{k}$ and $e >0$.
\end{itemize}

\medskip

Let $R$ be a standard bigraded $K$-algebra. In Section $2$, we study homological 
properties of shifted modules $R(-a,-b)_{\bigtriangleup}$, 
which play an important role in the transfer of homological information from 
$R$ to $R_{\bigtriangleup}$. It is important to bound 
the homological invariants of the shifted diagonal module $R(-a,-b)_{\bigtriangleup}$ as an 
$R_{\bigtriangleup}$-module. For a bigraded polynomial ring $R$, it is proved in \cite{CHTV} that
$R(-a,-b)_{\bigtriangleup}$ has a linear $R_{\bigtriangleup}$ resolution. 
Proposition \ref{main-prop} is an extension of \cite[Theorem $6.2$]{CHTV} for 
certain bigraded complete intersection ideal and crucial in proving Theorem \ref{main-thm}.

\medskip

Let $S=K[x_1,\dots,x_n]$ be a polynomial ring. In \cite[p.900]{CHTV} the authors mentioned 
that for a complete intersection ideal $I$ in $S$ generated by $f_1,f_2, \dots,f_k$, of 
homogeneous forms of degree $d$, the algebra $K[(I^e)_{ed+c}]$ is expected to be Koszul as soon 
as $c \geq \frac{d}{2}$. For $k=1,2$, it is obvious. The first nontrivial case is $k=3$. 
In Section $3$, we answer their expectation affirmatively for $k=3$, see Theorem \ref{main-thm}. 
The motivation for such generalization came from the work of Caviglia \cite{GC}, and 
Caviglia and Conca \cite{GC-AC}. Note that for $k=3$, the result of \cite{GC-AC} is just the case: 
$d=2$ and $c=1$; furthermore the main result of \cite{GC} correspond to the case: $d=2,\;c=1$, 
$f_1=x_1^2,f_2=x_2^2,f_3=x_3^2$ and $n=3$.

\medskip

In Section $4$, we generalize \cite[Theorem $6.2$]{CHTV} and some of its relevant corollaries. 
The main result of this section is Theorem \ref{poly to koszul change thm}, which is a generalization 
of \cite[Corollary $6.10$]{CHTV} regarding the koszulness of certain diagonals of the Rees algebra of an 
ideal in the polynomial ring. We show that the Koszulness property holds even if the polynomial 
ring is replaced by a Koszul ring. The reason for such generalization comes from the fact 
that from certain point of view, Koszul algebras behave homologically as polynomial rings.

\medskip

\paragraph{{\bf Acknowledgment:}}The author is very grateful to Professor Aldo Conca for several useful 
discussions on the subject of this article. The author thanks Professor Giulio Caviglia 
for his helpful suggestions concerning the presentation of this article. The author is also 
grateful to the referee for a number of helpful suggestions for improvement in the article.
\medskip
\section{Generalities and preliminary results} 
\medskip

Let $A$ be a standard graded $K$-algebra i.e. $A =\oplus A_i={S}/{I}$, where $S$ is a 
polynomial ring and $I$ a homogeneous ideal of $S$.
For a finitely generated graded $A$-module $M=\oplus M_i$, set
\[
 t_i^{A}(M)=\sup \{j : \Tor_{i}^{A}(M,K)_j \neq 0 \},
\]
with $t_i^{A}(M)=-\infty$ if $\Tor_i^A(M,K) = 0.$
\begin{defn}The \emph{Castelnuovo-Mumford regularity} $\reg_A(M)$ of
an $A$-module $M$ is defined to be
\[
 \reg_AM= \sup \{t_i^{A}(M)-i: i \geq 0\}.
\]
When $A=S$ is a polynomial ring, one has
\[
 \reg_SM=\max\{t_i^{S}(M)-i: i \geq 0\}.
\]
For a polynomial ring $S=K[x_1,\dots,x_n]$, one can also compute $\reg_SM$ via the local cohomology modules 
$H_{\mathfrak{m}}^{i}(M)$ for $i=0,1,\dots,n$. One has
\[
\reg_SM=\max\{j+i:H_{\mathfrak{m}}^{i}(M)_j \neq 0\}. 
\]
\end{defn}

\begin{defn}
{\bf{Koszul algebra:}} A standard graded $K$-algebra $A$ is said to be a \emph{Koszul algebra}
if the residue field $K$ has a linear $A$-resolution. Equivalently, $A$ is Koszul when $ \reg_A(K)=0$.
\end{defn}
\begin{ex}
Let $A={K[x]}/{(x^2)}$, then $K$ has a linear $A$-resolution
\[
\cdots \rightarrow A(-2) \xrightarrow{\bar{x}}  A(-1) \xrightarrow{\bar{x}} A \rightarrow K \rightarrow 0.
\]
\end{ex}
The property of being a Koszul algebra is preserved under various constructions, in particular 
under taking tensor products, Segre products and Veronese subrings, see Backelin and Fr\"oberg \cite{BF}.

\medskip

Let $A$ be a Koszul algebra, and $S$ be the polynomial ring mapping onto $A$. Then the regularity of 
any finitely generated graded module $M$ over $A$ is always finite; in fact, $\reg_AM \leq \reg_SM $, 
see Avramov and Eisenbud \cite[Theorem $1$]{AE}. If $M=\oplus_{i=a}^{b}M_i$ with $M_b\neq 0$, then 
\begin{eqnarray}\label{koszul-test-2}
\reg_AM \leq \reg_SM=b.
\end{eqnarray}
If $A=\oplus_{i\geq 0} A_i$ is a graded algebra, then the $c$-th \emph{Veronese subalgebra} 
is $A^{(c)}=\oplus_{i \geq 0} A_{ic}$. An element in $A_{ic}$ is considered to have degree $i$. 
\begin{defn}
Consider a standard graded $K$-algebra $A$. Given $k,m \in \mathbb{N}, $ and $0\leq k<m$, we set
\[
V_A(m,k)=\bigoplus_{i \in \mathbb{N}} A_{im+k}.
\]
We observe that $A^{(m)}=V_A(m,0)$ is the usual $m$-th {Veronese
subring} of $A$, and that the $V_A(m,k)$ are $A^{(m)}$-modules known
as the \emph{Veronese modules} of $A$. For a finitely generated
graded $A$-module $M$, similarly we define
\[
M^{(m)}=\bigoplus_{i \in \mathbb{Z}} M_{im}.
\]
We consider $A^{(m)}$ as a standard graded $K$-algebra with
homogeneous component of degree $i$ equal to $A_{im}$, and $M^{(m)}$
as a graded $A^{(m)}$-module with homogeneous components $M_{im}$ of degree $i$.
\end{defn}

\begin{defn}
Let $A$ and $B$ be positively graded $K$-algebra. Denote by $A \underline{\otimes} B $ 
the \emph{Segre product}
\[
 A \underline{\otimes} B= \bigoplus_{i \in \mathbb{N}} A_i \otimes_K B_i,
\]
of $A$ and $B$. Given graded modules $M$ and $N$ over $A$ and $B$, 
one may form the Segre product
\[
M \underline{\otimes} N= \bigoplus_{i \in \mathbb{Z}} M_i \otimes_K N_i ,
\]
of $M$ and $N$. Clearly $M \underline{\otimes} N$ is a graded $A \underline{\otimes} B$- module.
\end{defn}
A beautiful introduction to construction of multigraded objects including Segre products is given by 
Goto and Watanabe in \cite[Chapter $4$]{GW}. Segre products have been studied in the sense of 
Koszulness by several authors e.g. Backelin et al. \cite{BF}, Eisenbud et al. \cite{ERT}, 
Conca et al. \cite{CHTV}, Fr\"oberg \cite{Froberg} and Blum \cite{Blum}. We will use their results 
at several occasions in this paper. 

\medskip

Let $A$ and $B$ be Koszul $K$-algebras. Let $M$ be a finitely generated 
graded $A$-module and $N$ be a finitely generated graded $B$-module. 
Assume $M$ and $N$ have linear resolutions over $A$ and $B$
respectively. Also assume $M \underline{\otimes} N \neq 0$. 
Then by \cite[Lemma $6.5$]{CHTV}, $M \underline{\otimes} N$ has a 
linear $A \underline{\otimes} B$-resolution and  
\begin{eqnarray}\label{regularity relation segre product}
 \reg_{A \underline{\otimes} B}{M \underline{\otimes} N}=\max \{ \reg_AM,\; \reg_BN\}.
\end{eqnarray}
We will use this relation on regularity for Segre products in the 
proof of Theorem \ref{koszul-thm-gen} and Theorem \ref{poly to koszul change thm}.

\begin{rem}
It is to be noticed that \cite[Lemma $6.5$]{CHTV} also needs the hypothesis $M \underline{\otimes} N \neq 0$.
 For instance, Assume $M=K$ and $N=K(-1)$, then $M \underline{\otimes} N =0$ and this leads 
 to inconsistency in $(\ref{regularity relation segre product})$.  
\end{rem}

The following lemma is very useful. We will use this lemma in the proof of 
Proposition \ref{main-prop}, Theorem \ref{main-thm} and Theorem \ref{poly to koszul change thm}.

\begin{lem}\label{regularity-complex-homology}{\rm (Technical Lemma)} \\[.1mm]
 Let
\[
{\bf{M}}:\cdots \to M_{k+1}\to M_{k}\to M_{k-1}\to \cdots \to M_1 \to M_0\to  0,
\]
be a complex of graded $A$-modules with maps of degree $0$. Set $H_i=H_i({\bf{M}})$. 
Then for every $i \geq 0$ one has
\begin{eqnarray}\label{1st eqn-regularity-complex-homology}
t^A_i(H_0)\leq \max\{ \alpha,\beta \}, 
\end{eqnarray}
where $\alpha= \sup \{t^A_{i-j}(M_j) : j=0,\dots ,i\}$ and $\beta= \sup \{t^A_{i-j-1}(H_j) : j=1,\dots ,i-1\}$. 
Moreover one has 
\begin{eqnarray}\label{2nd eqn-regularity-complex-homology}
\reg_A(H_0) \leq \max\{ \alpha^{\prime},\beta^{\prime} \}, 
\end{eqnarray}
where $\alpha^{\prime}= \sup \{\reg_A(M_j)-j : j\geq 0 \}$ and $\beta^{\prime}= \sup \{\reg_A(H_j) -(j+1): j \geq 1\}$.
\end{lem}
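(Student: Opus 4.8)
The plan is to derive the regularity bound \eqref{2nd eqn-regularity-complex-homology} as a formal consequence of the $\Tor$-bound \eqref{1st eqn-regularity-complex-homology}, and to prove the latter by breaking the complex $\mathbf{M}$ into its cycle/boundary/homology short exact sequences and telescoping through the long exact sequence in $\Tor_\bullet^A(-,K)$. First I would record the three elementary inequalities attached to a short exact sequence $0\to U\to V\to W\to 0$ of graded $A$-modules: for every $i$,
\[
\begin{gathered}
t_i^A(V)\le\max\{t_i^A(U),t_i^A(W)\},\qquad t_i^A(U)\le\max\{t_i^A(V),t_{i+1}^A(W)\},\\
t_i^A(W)\le\max\{t_i^A(V),t_{i-1}^A(U)\}.
\end{gathered}
\]
Each follows at once from the graded exactness of the $\Tor$ sequence together with the vanishing $\Tor_{<0}^A(-,K)=0$.

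Next I would set $Z_p=\Ker(M_p\to M_{p-1})$ (with $Z_0=M_0$) and $B_p=\operatorname{im}(M_{p+1}\to M_p)$, so that $H_p=Z_p/B_p$ and one has the two families of short exact sequences
\[
0 \to B_p \to Z_p \to H_p \to 0, \qquad 0 \to Z_{p+1} \to M_{p+1} \to B_p \to 0, \qquad p \ge 0.
\]
Starting from the first family at $p=0$ (where $Z_0=M_0$), the third inequality above gives $t_i^A(H_0)\le\max\{t_i^A(M_0),t_{i-1}^A(B_0)\}$. I would then alternately feed the surviving boundary/cycle term into the second and first families: at the $p$-th stage the second family peels off a contribution $t_{i-p-1}^A(M_{p+1})$ and the first family peels off $t_{i-p-1}^A(H_p)$, while the remainder is reduced to $t_{i-p-2}^A(Z_{p+1})$.

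The main point, and the one demanding care, is the index bookkeeping together with termination. I would verify by a finite downward induction on the surviving homological index that the module contributions accumulate exactly as $t_{i-j}^A(M_j)$ and the homology contributions as $t_{i-j-1}^A(H_j)$, matching $\alpha$ and $\beta$. Termination is automatic: the surviving term after the $p$-th stage is $t_{i-p-2}^A(Z_{p+1})$, whose homological index falls below $0$ once $p\ge i-1$, so $\Tor$ vanishes and that term equals $-\infty$ and drops out of the maximum. This leaves precisely the ranges $0\le j\le i$ for the $M_j$ and $1\le j\le i-1$ for the $H_j$, which is \eqref{1st eqn-regularity-complex-homology}. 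I expect this telescoping and its termination to be the only genuinely delicate part of the argument.

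Finally, \eqref{2nd eqn-regularity-complex-homology} follows formally. Using $\reg_A(H_0)=\sup_i\{t_i^A(H_0)-i\}$ and subtracting $i$ from each term on the right of \eqref{1st eqn-regularity-complex-homology}, I would substitute $\ell=i-j$ in the module part to get $t_\ell^A(M_j)-(\ell+j)=(t_\ell^A(M_j)-\ell)-j\le\reg_A(M_j)-j$, and $\ell=i-j-1$ in the homology part to get $t_\ell^A(H_j)-(\ell+j+1)\le\reg_A(H_j)-(j+1)$. Taking suprema over all admissible indices recovers $\alpha'$ and $\beta'$, giving $\reg_A(H_0)\le\max\{\alpha',\beta'\}$. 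Everything after the first inequality is thus routine bookkeeping.
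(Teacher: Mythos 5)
Your proposal is correct and follows essentially the same route as the paper: the same cycle/boundary/homology short exact sequences with $Z_0=M_0$, the same telescoping of the $\Tor$-bounds (the paper cites these from Bruns--Conca--R\"omer, Lemma $2.2(a)$, where you derive them from the long exact sequence), and the same formal passage from the $t_i^A$ inequality to the regularity inequality. The index bookkeeping and termination you flag as the delicate point check out exactly as in the paper's summarized chain of inequalities.
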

\begin{proof}
Let $Z_i,\;B_i,\;H_i$ denotes the $i$-th cycles, boundaries and homology modules 
respectively. We have short exact sequences
\[
 0 \rightarrow B_i \rightarrow Z_i \rightarrow H_i \rightarrow 0,
\]
and
\[
 0 \rightarrow Z_{i+1} \rightarrow M_{i+1} \rightarrow B_i \rightarrow 0,
\]
with $Z_0=M_0$. 
Therefore, by \cite[Lemma $2.2(a)$]{BCR}, one has
\[
 \begin{split}
  t^A_i(H_0)&\leq \max\{ t^A_i(M_0) , t^A_{i-1}(B_0)\},\\
  t^A_{i-1}(B_0)&\leq \max\{ t^A_{i-1}(M_1) , t^A_{i-2}(Z_1)\},\\
  t^A_{i-2}(Z_1)&\leq \max\{ t^A_{i-2}(B_1) , t^A_{i-2}(H_1)\},\\
  t^A_{i-2}(B_1)&\leq \max\{ t^A_{i-2}(M_2) , t^A_{i-3}(Z_2)\}, \text{ and so on.}
 \end{split}
\]
Summarizing the details, one has: 
\begin{eqnarray*}
 t^A_i(H_0) \leq \max\{ t^A_i(M_0),\;t^A_{i-1}(M_1)\;,\;\dots,\;t^A_{i-j}(M_j),\; 
 t^A_{i-2}(H_1),\; \dots,\; t^A_{i-j-1}(H_j) \}.
\end{eqnarray*}
Take $\alpha= \sup \{t^A_{i-j}(M_j) : j=0,\dots ,i\}$ and 
$\beta= \sup \{t^A_{i-j-1}(H_j) : j=1,\dots ,i-1\}$, then we obtain the desired result 
$(\ref{1st eqn-regularity-complex-homology})$ for $t^A_i(H_0)$. The second inequality 
$(\ref{2nd eqn-regularity-complex-homology})$ follows from 
$( \ref{1st eqn-regularity-complex-homology})$.
\end{proof}

Let $R=\oplus_{(i,j) \in \mathbb{Z}^2}R_{(i,j)}$ be a bigraded standard $K$-algebra. 
Here standard means that $R_{(0,0)}=K$ and $R$ is generated as a $K$-algebra by 
the $K$-vector spaces $R_{(1,0)}$ and $R_{(0,1)}$ of finite dimension.
\begin{defn}
{\bf{Diagonal subalgebra:}} 
Let $R$ be a bigraded standard $K$-algebra. Let $c$ and $e$ be positive integers. 
Let $\bigtriangleup$ be the $(c,e)$-diagonal of $\mathbb{Z}^2$. The $(c,e)$-\emph{diagonal 
subalgebra} $R_{\bigtriangleup}$ of $R$ is defined as 
\[
 R_{\bigtriangleup}=\bigoplus_{s \in \mathbb{Z}} R_{(cs,es)}.
\]
\end{defn}
We observe that $R_{\bigtriangleup}$ is the $K$-subalgebra of $R$ generated by $R_{(c,e)}$ 
and hence it is a standard graded $K$-algebra. Similarly, one defines the $(c,e)$-diagonal 
submodule of any bigraded $R$-module $M=\oplus_{(i,j) \in \mathbb{Z}^2}M_{(i,j)}$ as 
$M_{\bigtriangleup}=\bigoplus_{s \in \mathbb{Z}} M_{(cs,es)}$. Notice that $M_{\bigtriangleup}$ 
is a module over $R_{\bigtriangleup}$. The map $M \longmapsto M_{\bigtriangleup}$, being a 
selection of homogeneous components, defines an exact functor from the category of bigraded 
$R$-modules and maps of degree $0$ to the category of graded $R_{\bigtriangleup}$-modules 
with maps of degree $0$. 

\begin{notation}
We have $\bigtriangleup = \{ (cs,es) : s \in \mathbb{Z} \}$, but the bounds on $c$ and $e$ change 
from time to time. Note that $c,s,\;\bigtriangleup$ will always be used in this way, with $c$ and $s$ changing 
as described. For a real number $\alpha$, we use $\lceil \alpha \rceil$ for the 
smallest integer $m$ such that $m \geq \alpha$.
\end{notation}
For $(a,b) \in \mathbb{Z}^2$, let $R(-a,-b)$ be a shifted copy of $R$. By definition
\[
 R(-a,-b)_{\bigtriangleup} = \bigoplus_{s \in \mathbb{Z}} R_{(-a+cs,-b+es)}.
\]
Since $R$ is positively graded, we may consider only those $s \in \mathbb{Z}$ 
for which $-a+cs \geq 0$ and $-b+es \geq 0$. 
Assume $\max {( \lceil \frac{a}{c} \rceil,\lceil \frac{b}{e} \rceil )}=\lceil \frac{a}{c} \rceil$. Then
\[
 R(-a,-b)_{\bigtriangleup} = \bigoplus_{s \geq \lceil \frac{a}{c} \rceil} R_{(-a+cs,-b+es)}.
\]
Therefore $R(-a,-b)_{\bigtriangleup}$ is a $R_{\bigtriangleup}$-submodule of $R$ generated by 
$R_{(-a+c \lceil \frac{a}{c} \rceil ,-b+ e\lceil \frac{a}{c} \rceil)}$. 
The other case is similar, summarizing the details, one has: 
\[
R(-a,-b)_{\bigtriangleup} = 
\begin{cases}
  R(-a+c \lceil \frac{a}{c} \rceil,-b+ e\lceil \frac{a}{c} \rceil)_{\bigtriangleup}(- \lceil  \frac{a}{c} \rceil),  
  &\text{ if  $ \lceil \frac{a}{c} \rceil \geq  \lceil \frac{b}{e} \rceil $;}\\
  R(-a+c \lceil \frac{b}{e} \rceil,-b+ e\lceil \frac{b}{e} \rceil)_{\bigtriangleup}(- \lceil  \frac{b}{e} \rceil),  
  &\text{ if  $ \lceil \frac{a}{c} \rceil \leq \lceil \frac{b}{e} \rceil $.} 
 \end{cases} 
\]
The homological properties of the shifted diagonal module $R(-a,-b)_{\bigtriangleup}$ play an important role 
in the transfer of homological information from $R$ to $R_{\bigtriangleup}$. In the following proposition, 
we try to bound the homological invariants (regularity) of the shifted diagonal module $R(-a,-b)_{\bigtriangleup}$ as an 
$R_{\bigtriangleup}$-module. The following proposition is crucial in proving Theorem \ref{main-thm}.

\begin{prop}\label{main-prop}
Let $S=K[x_1,\dots,x_m,t_1,\dots,t_n]$ be a polynomial ring bigraded by $\deg x_i=(1,0)$ 
for $i=1,\dots,m$ and $\deg t_i=(0,1)$ for $i=1,\dots,n$. 
Let $I$ be an ideal of $S$ generated by a regular sequence with elements all of bidegree $(d,1)$ 
and $R={S}/{I}$. Let $\frac{d}{2} \leq c < \frac{2d}{3}$ and $e >0$. Then:
\begin{enumerate}
 \item $R_{\bigtriangleup}$ is Koszul.
 \item $\reg_{R_{\bigtriangleup}} {R(-a,-b)_{\bigtriangleup}} \leq
\max \{ \lceil \frac{a}{c} \rceil,\lceil \frac{b}{e} \rceil  \}.$
\end{enumerate}
\end{prop}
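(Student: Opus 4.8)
The plan is to transfer homological information from the bigraded polynomial ring $S$ to the diagonal subalgebra $R_{\bigtriangleup}$ along the Koszul complex of the regular sequence generating $I$, exploiting that the diagonal functor is exact and that Lemma~\ref{regularity-complex-homology} converts such a complex into a regularity bound for its zeroth homology. I first record the base case. Writing $P=K[x_1,\dots,x_m]$ and $T=K[t_1,\dots,t_n]$, one has $S_{\bigtriangleup}=P^{(c)}\underline{\otimes}\,T^{(e)}$, the Segre product of two Veronese subrings of polynomial rings; by Backelin--Fr\"oberg \cite{BF} it is Koszul, and by \cite[Theorem~$6.2$]{CHTV} every shifted module $S(-a,-b)_{\bigtriangleup}$ has a linear $S_{\bigtriangleup}$-resolution, generated in degree $\max\{\lceil a/c\rceil,\lceil b/e\rceil\}$. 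Throughout, the hypothesis $\frac{d}{2}\le c<\frac{2d}{3}$ enters in the single clean form $\lceil d/c\rceil=2$: the generators of $I$ have bidegree $(d,1)$ and first meet the diagonal in internal degree $2$, so the relations they impose on $R_{\bigtriangleup}$ are quadratic.

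I would prove $(1)$ and $(2)$ by a simultaneous induction on the length $k$ of the regular sequence, the case $k=0$ being the base case above. For the inductive step set $R'=S/(f_1,\dots,f_{k-1})$, so that $R=R'/(f_k)$ with $f_k$ a nonzerodivisor of bidegree $(d,1)$ on $R'$. Applying the exact diagonal functor to $0\to R'(-d,-1)\xrightarrow{f_k}R'\to R\to 0$ gives the short exact sequence of $R'_{\bigtriangleup}$-modules
\[
0\longrightarrow R'(-d,-1)_{\bigtriangleup}\longrightarrow R'_{\bigtriangleup}\longrightarrow R_{\bigtriangleup}\longrightarrow 0,
\]
whose kernel is the ideal $(f_kR')_{\bigtriangleup}$ of $R'_{\bigtriangleup}$, isomorphic to $R'(-d,-1)_{\bigtriangleup}$. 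By the inductive form of $(2)$ for $R'$ this kernel is generated in degree $\lceil d/c\rceil=2$ and satisfies $\reg_{R'_{\bigtriangleup}}R'(-d,-1)_{\bigtriangleup}\le 2$, so it is a $2$-linear ideal of the Koszul ring $R'_{\bigtriangleup}$. I would then deduce $(1)$, that $R_{\bigtriangleup}$ is Koszul, from the change-of-rings principle that the quotient of a Koszul algebra by an ideal with a linear resolution generated in degree $2$ is again Koszul. Note this route genuinely needs the inductive step: over $S_{\bigtriangleup}$ the ideal $I_{\bigtriangleup}$ is \emph{not} $2$-linear, so a one-shot argument through $S_{\bigtriangleup}$ fails, whereas $R'(-d,-1)_{\bigtriangleup}$ \emph{is} $2$-linear over $R'_{\bigtriangleup}$.

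With $(1)$ in hand for the current $k$, I would establish the sharp bound $(2)$ directly over $R_{\bigtriangleup}$ rather than over $R'_{\bigtriangleup}$. Using the reduction formula for $R(-a,-b)_{\bigtriangleup}$ recorded before the statement, a shift reduces the claim to the finitely many residues $(a,b)$ with $\max\{\lceil a/c\rceil,\lceil b/e\rceil\}\in\{0,1\}$, i.e.\ to the linearity of the resolutions of these low shifts. For these I would assemble short exact sequences of $R_{\bigtriangleup}$-modules coming from the degree-$(1,0)$ and degree-$(0,1)$ generators of $R$, whose terms and higher homologies are themselves shifted diagonals $R(-a',-b')_{\bigtriangleup}$ of strictly smaller invariant; this self-referential feature is what drives an inner induction on $\max\{\lceil a/c\rceil,\lceil b/e\rceil\}$. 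Feeding the already-known regularities into both $\alpha'$ and $\beta'$ of inequality $(\ref{2nd eqn-regularity-complex-homology})$ of Lemma~\ref{regularity-complex-homology}, and using $\lceil d/c\rceil=2$ to keep all shifted invariants within range, the estimate closes at the predicted value $\max\{\lceil a/c\rceil,\lceil b/e\rceil\}$.

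The crux is the sharpness demanded in $(2)$. Working through $R'_{\bigtriangleup}$ is lossy: the short exact sequence above yields only $\reg_{R'_{\bigtriangleup}}R_{\bigtriangleup}\le 1$, whereas the sharp value over $R_{\bigtriangleup}$ itself is $0$, and more generally the naive estimate overshoots $\max\{\lceil a/c\rceil,\lceil b/e\rceil\}$ by one. Suppressing this off-by-one is the whole difficulty. Concretely, I expect to control the homology contribution $\beta'$ in $(\ref{2nd eqn-regularity-complex-homology})$, equivalently the connecting maps in the long exact $\Tor^{R_{\bigtriangleup}}_{\bullet}(-,K)$ sequences, and to show that the offending top linear strand cancels, so that multiplication by $f_k$ cannot raise the regularity past $\max\{\lceil a/c\rceil,\lceil b/e\rceil\}$. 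It is precisely here that the two-sided hypothesis $\frac{d}{2}\le c<\frac{2d}{3}$, through $\lceil d/c\rceil=2$, must be used decisively; the change-of-rings step in $(1)$ is comparatively soft but relies on the same quadratic generation.
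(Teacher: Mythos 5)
Your base case and your proof of part (1) are correct and essentially identical to the paper's: induction along the regular sequence, the diagonalized short exact sequence $0\to R'(-d,-1)_{\bigtriangleup}\to R'_{\bigtriangleup}\to R_{\bigtriangleup}\to 0$, the inductive bound $\reg_{R'_{\bigtriangleup}}R'(-d,-1)_{\bigtriangleup}\le\lceil d/c\rceil=2$, and the change-of-rings principle, which is precisely \cite[Lemma $2.1(3)$]{GC-AC}. The problem is part (2), where your proposal stops being a proof exactly where the paper has to work hardest.

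Two concrete gaps. First, the normalization is not to ``finitely many residues'': in the case $\lceil a/c\rceil\ge\lceil b/e\rceil$ the shift formula produces $R(\alpha,\beta)_{\bigtriangleup}$ with $0\le\alpha<c$ but $\beta=e\lceil a/c\rceil-b$ \emph{unbounded}, so one must prove $\reg_{R_{\bigtriangleup}}R(\alpha,\beta)_{\bigtriangleup}\le 0$ for infinitely many $\beta$; moreover all these modules are already normalized to invariant $0$, so there is no value of $\max\{\lceil a/c\rceil,\lceil b/e\rceil\}$ left for your ``inner induction'' to decrease. Second, and decisively, the mechanism you invoke to suppress the off-by-one --- unspecified short exact sequences from the degree-$(1,0)$ and degree-$(0,1)$ generators, plus the expectation that ``the offending top linear strand cancels'' --- is never constructed; the phrase ``I expect to control the homology contribution $\beta'$'' concedes that the key step is missing. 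What the paper actually does is: (i) observe that when $\lceil a/c\rceil<\lceil b/e\rceil$ the passage through $R'_{\bigtriangleup}$ is \emph{not} lossy, because strictness gives one unit of slack ($\lceil (a+d)/c\rceil\le\lceil a/c\rceil+2\le\lceil b/e\rceil+1$), and \cite[Lemma $2.1(1)$]{GC-AC} then transfers the sharp bound from $R'_{\bigtriangleup}$ down to $R_{\bigtriangleup}$; and (ii) in the opposite case, tensor the minimal free resolution of $S/P^{\beta}$, $P=(t_1,\dots,t_n)$, with $R(\alpha,\beta)$, verify by explicit arithmetic that every diagonalized homology module $[(S/P^{\beta})(-di+\alpha,-i+\beta)]_{\bigtriangleup}$ vanishes --- this is where the hypothesis $\frac{d}{2}\le c<\frac{2d}{3}$ is used through genuine inequalities, not merely through $\lceil d/c\rceil=2$ --- then truncate the resulting exact complex and apply Lemma \ref{regularity-complex-homology}; its terms $R(\alpha,-j)_{\bigtriangleup}$ are handled by case (i), not by modules of ``strictly smaller invariant''. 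Without an analogue of step (ii), your argument for (2) does not close; and since Theorem \ref{main-thm} consumes exactly the sharp bound of Proposition \ref{main-prop}(2), this gap is essential.
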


\begin{proof}
Let $h$ be the codimension of $I$. The proof is by induction on $h$. 
If $h=0$, then $R_{\bigtriangleup}$ is the Segre product of $K[x_1,\dots,x_m]^{(c)}$ 
and $K[t_1,\dots,t_n]^{(e)}$. Thus $R_{\bigtriangleup}$ is Koszul by \cite{BF}. 
For $(b)$, see \cite[proof of Theorem $6.2$]{CHTV}.

Assume $h>0$. We may write $R={T}/{(f)}$ where $f$ is a $T$-regular 
element of bidegree $(d,1)$ and where $T$ is defined as the quotient of $S$ by an $S$-regular 
sequence of length $h-1$ of elements of bidegree $(d,1)$. We have a short 
exact sequence of $T$-modules:
\begin{eqnarray}\label{2nd exact seq}
 0 \longrightarrow T(-d,-1) \longrightarrow T \longrightarrow R \longrightarrow 0
\end{eqnarray} 
and applying $-_{\bigtriangleup}$, we have an exact sequence of $T_{\bigtriangleup}$-modules:
\[
 0 \longrightarrow T(-d,-1)_{\bigtriangleup} \longrightarrow T_{\bigtriangleup} 
\longrightarrow R_{\bigtriangleup} \longrightarrow 0.
\]
By induction we know $T_{\bigtriangleup}$ is Koszul and that 
$ \reg_{T_{\bigtriangleup}}{T(-d,-1)_{\bigtriangleup}} \leq \lceil \frac{d}{c} \rceil$. 
As $ \frac{d}{c} \leq 2$, one has
\[
 \reg_{T_{\bigtriangleup}}{R_{\bigtriangleup}} \leq 1.
\]
By \cite[Lemma $2.1(3)$]{GC-AC}, we may conclude that $R_{\bigtriangleup}$ is Koszul, as 
$T_{\bigtriangleup}$ is Koszul by induction.
Now to prove $(b)$ we consider the following two cases.\\ 
{\bf{Case $1$.}} Assume $\lceil \frac{a}{c} \rceil < \lceil \frac{b}{e} \rceil$. 
Shift $(\ref{2nd exact seq})$ by $(-a,-b)$ and then apply $-_{\bigtriangleup}$, we get a short exact sequence 
of $T_{\bigtriangleup}$-modules:
\[
 0 \longrightarrow T(-a-d,-b-1)_{\bigtriangleup} \longrightarrow T(-a,-b)_{\bigtriangleup}
 \longrightarrow R(-a,-b)_{\bigtriangleup} \longrightarrow 0.
\] 
So we have: 
\[
 \reg_{T_{\bigtriangleup}}{R(-a,-b)_{\bigtriangleup}} \leq \max 
  \{ \; \reg_{T_{\bigtriangleup}}{T(-a,-b)_{\bigtriangleup}},\; 
\reg_{T_{\bigtriangleup}}{T(-a-d,-b-1)_{\bigtriangleup}} -1\; \}.
\]
By induction, one has $ \reg_{T_{\bigtriangleup}}{T(-a,-b)_{\bigtriangleup}} \leq \lceil \frac{b}{e} \rceil$, and
\[
\reg_{T_{\bigtriangleup}}{T(-a-d,-b-1)_{\bigtriangleup}} \leq
\max \{ \lceil \frac{a+d}{c} \rceil, \lceil \frac{b+1}{e} \rceil \}.
\]
Since 
\[
 \lceil \frac{a+d}{c} \rceil \leq \lceil \frac{a}{c} \rceil + \lceil \frac{d}{c} \rceil \leq (\lceil \frac{b}{e} \rceil -1) +2 
 = \lceil \frac{b}{e} \rceil +1 \text{ and }\lceil \frac{b+1}{e} \rceil \leq \lceil \frac{b}{e} \rceil + 1,
\]
we conclude that $\reg_{T_{\bigtriangleup}}{T(-a-d,-b-1)_{\bigtriangleup}} \leq \lceil \frac{b}{e} \rceil + 1$. 
Thus we have 
\begin{eqnarray}\label{some-equality}
\reg_{T_{\bigtriangleup}}{R(-a,-b)_{\bigtriangleup}} \leq \lceil \frac{b}{e} \rceil.
\end{eqnarray}
Since we have already shown that $ \reg_{T_{\bigtriangleup}}R_{\bigtriangleup} \leq 1$, 
we may conclude by \cite[Lemma $2.1(1)$]{GC-AC} that 
\[
 \reg_{R_{\bigtriangleup}}{R(-a,-b)_{\bigtriangleup}} \leq \lceil \frac{b}{e} \rceil.
\]
{\bf{Case $2$.}} Assume $\lceil \frac{a}{c} \rceil \geq \lceil \frac{b}{e} \rceil$. 
Set $P=(t_1,\dots,t_n) \subset S.$ We have
\begin{eqnarray*}
 R(-a,-b)_{\bigtriangleup}= R(-a+c \lceil \frac{a}{c} \rceil,-b+ e\lceil \frac{a}{c} \rceil)_{\bigtriangleup}
(- \lceil  \frac{a}{c} \rceil).
\end{eqnarray*}
So we have to prove that $\reg_{R_{\bigtriangleup}}{R(\alpha,\beta)_{\bigtriangleup}} \leq 0$ 
where $\alpha=-a+c \lceil \frac{a}{c} \rceil $ and $\beta=-b+ e\lceil \frac{a}{c} \rceil$. 
Consider the minimal free (bigraded) resolution of $S/{P^{\beta}}$ as an $S$-module:
\begin{eqnarray*}
 {\bf{F}}:\quad 0 \longrightarrow F_n \longrightarrow F_{n-1} \longrightarrow \cdots 
\longrightarrow F_1 \longrightarrow F_0 \longrightarrow 0, 
\end{eqnarray*}
with $F_0=S$ and $F_i=S^{\sharp}(0,-\beta -i+1)$ for $i >0$ where $\sharp$ denotes 
some integer depending on $n,\beta$ and $i$ that is irrelevant in our discussion. 
The homology of ${\bf{F}} \tensor R$ is $\Tor_{\bullet}^{S}(S/{P^{\beta}},R)$. 
We may as well compute $\Tor_{\bullet}^{S}(S/{P^{\beta}},R)$ as the homology of 
$S/{P^{\beta}} \tensor {\bf{G}}$ where ${\bf{G}}$ is a free resolution 
of $R$ as an $S$-module. By assumption, we may take ${\bf{G}}$ to be a Koszul 
complex on a sequence of elements of bidegree $(d,1)$. It follows that:
\begin{eqnarray*}
 H_i({\bf{F}} \tensor R) = 
\begin{cases}
   \text{a subquotient of } (S/{P^{\beta}})^{\sharp}(-di,-i),  &\text{ if  $ 0 \leq i \leq h $;}\\
    0,  &\text{ if $ i> h$.}
\end{cases}
\end{eqnarray*}
Shifting with $(\alpha,\beta)$ and applying $-_{\bigtriangleup}$ we have a complex 
$({\bf{F}} \tensor R(\alpha,\beta))_{\bigtriangleup}$. We claim this complex 
has no homology. Shifting and applying $-_{\bigtriangleup}$ are compatible operations 
with taking homology. Therefore to prove $({\bf{F}} \tensor R(\alpha,\beta))_{\bigtriangleup}$ 
has no homology at all, we only need to check that 
\begin{eqnarray}\label{inequality case1}
 [(S/{P^{\beta}})(-di+\alpha,-i+\beta)]_{\bigtriangleup}=0 \;\; \text{ for all $i$.}
\end{eqnarray}
To prove $(\ref{inequality case1})$, take the $j$-th degree component, 
\[
[[(S/{P^{\beta}})(-di+\alpha,-i+\beta)]_{\bigtriangleup} ]_j= (S/{P^{\beta}})_{(cj-di+\alpha,ej-i+\beta)}.
\]
We will show that 
\begin{eqnarray}\label{ext-inq}
 (S/{P^{\beta}})_{(cj-di+\alpha,ej-i+\beta)}=0. 
\end{eqnarray}
The case $i=0$ will be dealt separately. Assume $i >0$. Clearly $(\ref{ext-inq})$ holds 
if $ej-i+\beta \geq \beta$, that is, if $ej \geq i$. 
 To complete the argument for $(\ref{ext-inq})$, it is enough to show that $cj-di+\alpha <0 $ for $ej <i$. 
 Let $a=qc+r\; ; \; 0 \leq r <c$, then 
\[
 \lceil \frac{a}{c} \rceil = \begin{cases}
    q+1,  &\text{ if  $r \neq 0 $;}\\
    q,     &\text{ if $r=0$.}
    \end{cases} 
\]
Therefore
\begin{eqnarray}\label{alpha value}
cj-di +\alpha =  \begin{cases}
    cj-di+c-r,  &\text{ if  $r \neq 0 $;}\\
    cj-di,     &\text{ if $r=0$.}
    \end{cases} 
\end{eqnarray}
Assume $r=0$ and $ej<i$. It is easy to see that $$cj-di < i(\frac{c}{e}-d)< i(\frac{2d}{3e}-d)<0.$$
Assume $r \neq 0$ and $ej<i$. By assumption we have $j \leq \frac{i}{e} - \frac{1}{e}$ and $cj-di+c-r < c(j+1)-id$. 
One has 
\begin{eqnarray}\label{ext-eqn1}
 c(j+1)-di \leq c(\frac{i}{e}-\frac{1}{e}+1)-di=i(\frac{c}{e}-d)-\frac{c}{e}+c.
\end{eqnarray}
As $\frac{d}{2} \leq c < \frac{2d}{3}$ and $e>0$, we may write 
\begin{eqnarray}\label{ext-eqn2}
  \frac{c}{e}-d<\frac{2d}{3e}-d= \frac{d(2-3e)}{3e} \text{ and } -\frac{c}{e} \leq - \frac{d}{2e}.
\end{eqnarray}
Thus by $(\ref{ext-eqn1})$ and $(\ref{ext-eqn2})$, we have
\[
 c(j+1)-di < i(\frac{d(2-3e)}{3e})-\frac{d}{2e}+\frac{2d}{3}=\frac{d}{6e} [i(4-6e)+(4e-3)]. 
\]
It is easy to see that $i(4-6e)+(4e-3)<0$ for all $i>0$. Assume $i=0$. Denote by ${\bf{C}}$, the complex 
$({\bf{F}} \tensor R(\alpha,\beta))_{\bigtriangleup}$. If $H_0({\bf{C}}) \neq 0$, then either $ej + \beta < \beta$ or 
$cj + \alpha \geq 0$ in $(\ref{ext-inq})$. Thus $H_0({\bf{C}}) \neq 0$ if $ \frac{-\alpha}{c} \leq j < 0$, 
which is not possible, since $j$ has to be an integer and by previous discussion in $(\ref{alpha value})$, 
one has $ -1 < \frac{-\alpha}{c} \leq 0$. Since $H_i({\bf{C}}) = 0$ for all $ i \geq 0$, we have the 
following exact complex ${\bf{C}}$:
\[ 
0 \longrightarrow R(\alpha, -i+1)_{\bigtriangleup}  \longrightarrow \cdots 
\longrightarrow R(\alpha,-1)_{\bigtriangleup}
\longrightarrow  R(\alpha,0)_{\bigtriangleup}    \longrightarrow    R(\alpha,\beta)_{\bigtriangleup} \longrightarrow 0.  
\]
From the exact complex ${\bf{C}}$, we build another complex:
\[ 
{\bf{T}}:\;\;    0 \longrightarrow R(\alpha, -i+1)_{\bigtriangleup}  \longrightarrow \cdots 
\longrightarrow R(\alpha,-1)_{\bigtriangleup}
\longrightarrow  R(\alpha,0)_{\bigtriangleup}    \longrightarrow  0.  
\]
Then the homology of the new complex ${\bf{T}}$ is given by
\[
 H_i({\bf{T}}) = 
\begin{cases}
  R(\alpha,\beta)_{\bigtriangleup} ,  &\text{ if $i=0$;}\\
    0,  &\text{ if $ i> 0$.}
\end{cases}
\]
Thus Lemma \ref{regularity-complex-homology} applied to the complex ${\bf{T}}$, one has
\[
 \reg_{R_{\bigtriangleup}}{R(\alpha,\beta)_{\bigtriangleup}} \leq 
 \max \{  \reg_{R_{\bigtriangleup}}{R(\alpha,-i)_{\bigtriangleup}}  -i: i \geq 0  \}.
\]
Note that by Case $1$, we have 
$\reg_{R_{\bigtriangleup}}{R(\alpha,-i)_{\bigtriangleup}} \leq \lceil \frac{i}{e} \rceil $, since 
$\lceil \frac{- \alpha }{c} \rceil \leq \lceil \frac{i}{e} \rceil$. Thus we conclude that 
$ \reg_{R_{\bigtriangleup}}{R(\alpha,\beta)_{\bigtriangleup}} \leq 0$. Hence the claim $(b)$ follows.
\end{proof}

\begin{rem}
Note that the Proposition \ref{main-prop} is an extension of \cite[Theorem $6.2$]{CHTV} for 
certain bigraded complete intersection ideal. Note also that the statement of 
Proposition \ref{main-prop} is similar to (and more general then) \cite[Proposition $2.2$]{GC-AC}.
\end{rem}

\section{Improvement of bounds}
\medskip

Let $S=K[x_1,\dots,x_n]$ be a polynomial ring. Let $I$ be a homogeneous 
ideal in $S$ generated by a regular sequence $f_1,f_2, \dots,f_k$, of homogeneous forms 
of degrees $d$. Let $c$ and $e$ be positive integers. Consider the $K$-subalgebra 
of $S$ generated by the homogeneous forms of degree $ed+c$ in the ideal $I^e$, that is, $K[(I^e)_{ed+c}]$. 
We have seen that $K[(I^e)_{ed+c}]$ is the $(c,e)$-diagonal subalgebra of Rees$(I)$. 

Recall that the $K$-algebra $K[(I^e)_{ed+c}]$ is shown to be quadratic if $c \geq \frac{d}{2}$ 
and Koszul if $c \geq \frac{d(k-1)}{k}$, see \cite[Corollary $6.10$]{CHTV}. In \cite[p.900]{CHTV} the 
authors mentioned that, they expect $K[(I^e)_{ed+c}]$ to be Koszul also for $\frac{d}{2} \leq c < \frac{d(k-1)}{k}$. 
For $k=1,2$, it is obvious that $K[(I^e)_{ed+c}]$ is Koszul. The very first nontrivial 
instance of this problem occurs for $d=2$ and $k=3$, in which case, only possible value is $c=1$. 
For $c=1$, the answer is positive and solved by Caviglia and Conca in \cite{GC-AC}.  

In this Section, for $k=3$ and for any $d$, we prove that $K[(I^e)_{ed+c}]$ 
is Koszul as soon as $c \geq \frac{d}{2}$ and $e >0$. The main theorem of this section is as follows:

\begin{thm}\label{main-thm} Let $I$ be an ideal of the polynomial ring $K[x_1,\dots,x_n]$ 
generated by a regular sequence $f_1,f_2, f_3$, of homogeneous forms of degree $d$. 
Then $K[(I^e)_{ed+c}]$ is Koszul for all $c \geq \frac{d}{2}$ and $e >0$. 
\end{thm}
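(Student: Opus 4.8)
\emph{Plan of proof.} Since $k=3$ gives $\tfrac{d(k-1)}{k}=\tfrac{2d}{3}$, the range $c\geq \tfrac{2d}{3}$ is already covered by \cite[Corollary $6.10$]{CHTV}, and I only have to treat $\tfrac{d}{2}\leq c<\tfrac{2d}{3}$ --- exactly the range of Proposition \ref{main-prop}. Set $P=K[x_1,\dots,x_n,T_1,T_2,T_3]$, bigraded by $\deg x_i=(1,0)$ and $\deg T_j=(0,1)$. As $f_1,f_2,f_3$ is a regular sequence, $\Rees(I)$ coincides with the symmetric algebra of $I$, so $R:=\Rees(I)=P/J$, where
\[
J=I_2(\Phi),\qquad \Phi=\left(\begin{smallmatrix} f_1 & f_2 & f_3\\ T_1 & T_2 & T_3\end{smallmatrix}\right),\qquad g_{ij}:=f_iT_j-f_jT_i,
\]
so that $J=(g_{12},g_{13},g_{23})$ with each $g_{ij}$ of bidegree $(d,1)$. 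The ideal $J$ has height $2$, and the Hilbert--Burch theorem gives the $P$-free resolution
\[
0\longrightarrow P(-2d,-1)\oplus P(-d,-2)\xrightarrow{\ \Phi^{t}\ } P(-d,-1)^3\xrightarrow{\ [g_{ij}]\ } P\longrightarrow R\longrightarrow 0 .
\]

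The plan is to place $R$ one step above a genuine bigraded complete intersection and invoke Proposition \ref{main-prop}. The minors $g_{12},g_{13}$ form a $P$-regular sequence: the locus $V(g_{12},g_{13})$ is the union of $V(J)$ and $V(f_1,T_1)$, both of height $2$, so $\mathrm{ht}(g_{12},g_{13})=2$. Hence $T:=P/(g_{12},g_{13})$ is a bigraded complete intersection defined by a regular sequence of two forms of bidegree $(d,1)$, and Proposition \ref{main-prop} applies: $T_{\bigtriangleup}$ is Koszul and
\[
\reg_{T_{\bigtriangleup}}T(-a,-b)_{\bigtriangleup}\ \leq\ \max\Bigl\{\bigl\lceil \tfrac{a}{c}\bigr\rceil,\bigl\lceil \tfrac{b}{e}\bigr\rceil\Bigr\}\qquad\text{for all }(a,b).
\]
Reducing the two Koszul relations $f_1g_{23}=f_2g_{13}-f_3g_{12}$ and $T_1g_{23}=T_2g_{13}-T_3g_{12}$ modulo $(g_{12},g_{13})$ shows $\mathrm{Ann}_T(g_{23})=(f_1,T_1)T$, whence multiplication by $g_{23}$ identifies
\[
g_{23}T\ \cong\ \bigl(T/(f_1,T_1)\bigr)(-d,-1)=L(-d,-1),\qquad L:=\bigl(S/(f_1)\bigr)[T_2,T_3].
\]
Applying the exact functor $-_{\bigtriangleup}$ to $0\to g_{23}T\to T\to R\to 0$ and using $\reg_{T_{\bigtriangleup}}T_{\bigtriangleup}=0$ gives $\reg_{T_{\bigtriangleup}}R_{\bigtriangleup}\leq \max\bigl\{0,\ \reg_{T_{\bigtriangleup}}(g_{23}T)_{\bigtriangleup}-1\bigr\}$. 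Thus it suffices to prove the sharp bound $\reg_{T_{\bigtriangleup}}(g_{23}T)_{\bigtriangleup}\leq 2$: then $\reg_{T_{\bigtriangleup}}R_{\bigtriangleup}\leq 1$, and since $R_{\bigtriangleup}$ is a quotient of the Koszul algebra $T_{\bigtriangleup}$, \cite[Lemma $2.1(3)$]{GC-AC} forces $R_{\bigtriangleup}$ to be Koszul.

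It remains to bound $\reg_{T_{\bigtriangleup}}(g_{23}T)_{\bigtriangleup}=\reg_{T_{\bigtriangleup}}L(-d,-1)_{\bigtriangleup}$. The module $L=(S/(f_1))[T_2,T_3]$ is maximal Cohen--Macaulay over the codimension-$2$ complete intersection $T$ (the component of $T$ residual to $R$), so although its Betti numbers over $T$ may grow, the \emph{internal} bigraded shifts in its minimal free resolution should increase only by the relation bidegree $(d,1)$ every two homological steps: the $i$-th free module $F_i$ of $L$ lives in shifts $(a,b)$ with $a\leq \lceil i/2\rceil d+O(1)$ and $b\leq \lceil i/2\rceil+O(1)$. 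Feeding this resolution into the Technical Lemma (Lemma \ref{regularity-complex-homology}) together with Proposition \ref{main-prop}(2), the contribution of $F_i$ is $\max\{\lceil a/c\rceil,\lceil b/e\rceil\}-i$; here the hypothesis $c\geq \tfrac{d}{2}$, i.e.\ $\lceil d/c\rceil\leq 2$, is decisive, since $\lceil a/c\rceil\leq \lceil \tfrac{id}{2c}\rceil+O(1)\leq i+O(1)$ keeps these contributions bounded and yields $\reg_{T_{\bigtriangleup}}L_{\bigtriangleup}\leq 1$.

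The main obstacle is upgrading this to the sharp statement for the shift $(-d,-1)$. The crude term-by-term estimate above, applied to $L(-d,-1)$, only gives $\reg_{T_{\bigtriangleup}}L(-d,-1)_{\bigtriangleup}\leq 3$ (hence merely $\reg_{T_{\bigtriangleup}}R_{\bigtriangleup}\leq 2$), because shifting by $(-d,-1)$ adds $\lceil d/c\rceil=2$ to the top strand. Getting down to $2$ requires showing that the offending diagonal components of that top strand actually vanish --- precisely the kind of direct vanishing analysis carried out in Case $2$ of the proof of Proposition \ref{main-prop}, now run for the resolution of $L$. Concretely I would either (i) track the graded pieces $[(F_i)(-d,-1)]_{\bigtriangleup}$ and show the critical ones are zero for $c\geq \tfrac{d}{2}$, or (ii) use the auxiliary short exact sequence $0\to L(-d,-1)\xrightarrow{g_{23}} L\to \Rees(\bar I)\to 0$, with $\bar I=(\bar f_2,\bar f_3)\subset S/(f_1)$ a two-generated complete intersection, to transfer the shift onto the smaller Rees algebra $\Rees(\bar I)$ and bound its diagonal separately. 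This sharp vanishing, forced by $c\geq \tfrac{d}{2}$, is the technical heart of the theorem.
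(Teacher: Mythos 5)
Your reduction to the range $\frac{d}{2}\leq c<\frac{2d}{3}$, the auxiliary complete intersection $T=P/(g_{12},g_{13})$ (up to relabeling of indices, the paper's $B=S^{\prime}/(h_1,h_2)$), the identification $g_{23}T\cong\bigl(T/(f_1,T_1)\bigr)(-d,-1)$, and the target inequality $\reg_{T_{\bigtriangleup}}\Rees(I)_{\bigtriangleup}\leq 1$ all coincide with the paper's proof of Theorem \ref{main-thm}. But the proposal stops exactly where the real work begins, and you say so yourself: the decisive estimate $\reg_{T_{\bigtriangleup}}(g_{23}T)_{\bigtriangleup}\leq 2$ is never proved. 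The assertion about the bigraded shifts in the minimal $T$-free resolution of $L=(S/(f_1))[T_2,T_3]$ (growth by $(d,1)$ every two homological steps) is a heuristic with no argument behind it --- note that $(f_1,T_1)T$ has height zero in $T$, so this resolution is infinite and every strand must be controlled --- and even granting it, you concede the term-by-term estimate yields only $\reg_{T_{\bigtriangleup}}L(-d,-1)_{\bigtriangleup}\leq 3$, hence $\reg_{T_{\bigtriangleup}}\Rees(I)_{\bigtriangleup}\leq 2$, which does not give Koszulness. Your alternatives (i) and (ii) are descriptions of what one might try, not proofs. A smaller issue: the two Koszul relations only give the inclusion $(f_1,T_1)T\subseteq\mathrm{Ann}_T(g_{23})$; the reverse inclusion is the nontrivial half (the paper's Lemma \ref{homology computation}(ii), taken from \cite[Lemma $3.1$]{GC-AC}) and must be proved separately.

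For comparison, the paper closes precisely this gap without ever writing a minimal resolution of $L$. Using $h_3t_3=0$ in $B$ it forms the explicit eventually periodic complex $(\ref{B-complex})$ with rank-one terms $B(-id,-2i)$ and $B(-(i+1)d,-2i-1)$, and computes \emph{all} of its homology in closed form from the colon ideals of Lemma \ref{homology computation}: $H_0=\Rees(I)$, $H_{2i}=0$ for $i>0$, and $H_{2i+1}=[S^{\prime}/(t_1,t_2,t_3)](-(i+2)d,-2i-1)$. After applying $-_{\bigtriangleup}$, this odd homology vanishes for $e\geq 2$; for $e=1$ it can survive, but it is concentrated in degrees $\leq 2i+1$, so by $(\ref{koszul-test-2})$ its regularity over $B_{\bigtriangleup}$ is at most $2i+1$, i.e.\ at most its homological position. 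Lemma \ref{regularity-complex-homology} then gives $\reg_{B_{\bigtriangleup}}\Rees(I)_{\bigtriangleup}\leq\max\{\alpha^{\prime},\beta^{\prime}\}$ with $\beta^{\prime}\leq -1$ from this homology bound and $\alpha^{\prime}\leq 1$ from Proposition \ref{main-prop}(2) together with $\frac{3}{2}<\frac{d}{c}\leq 2$. The two ingredients missing from your plan are exactly these: a complex whose homology is known exactly (so no guess about minimal shifts is needed), and the observation that nonzero homology is harmless once its top degree --- hence, by $(\ref{koszul-test-2})$, its regularity --- is small enough. Without something playing that role, your argument does not reach $\reg_{T_{\bigtriangleup}}\Rees(I)_{\bigtriangleup}\leq 1$, so as it stands the proof is incomplete.
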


We consider the Rees algebra, Rees$(I) \subset S[t]$ of $I$ with its standard bigraded 
structure induced by deg$(x_i)=(1,0)$ and deg$(f_jt)=(0,1)$. It can be realized as a 
quotient of the polynomial ring $S^{\prime}=K[x_1,\dots,x_n,t_1,t_2,t_3]$ 
bigraded with deg$(x_i)=(1,0)$ and deg$(t_j)=(0,1)$, by the ideal $J$ 
generated by the $2$-minors of
\[
 M=\begin{pmatrix}
    f_1 & f_2 & f_3 \\
    t_1 & t_2 & t_3    
   \end{pmatrix}.
\]
Let $h_1,h_2,h_3$ be the $2$-minors of $M$ with the appropriate sign, say $h_i$ equal 
to $(-1)^{i+1}$ times the minor of $M$ obtained by deleting the $i$-th column. Hence
\[
 J=I_2(M)=(h_1,h_2,h_3).
\]
The sign convention is chosen so that the rows of the matrix $M$ are syzygies of $h_1,h_2,h_3$. We 
will use the following lemma to prove Theorem $\ref{main-thm}$.
\begin{lem}\label{homology computation} {\rm (Technical Lemma)} \\[1mm]
$(i)$ $h_1,h_2$ form a regular $S^{\prime}$-sequence.\\
$(ii)$  $(h_1,h_2):h_3=(f_3,t_3)$.\\
$(iii)$ $(h_1,h_2):t_3=J$.\\
$(iv)$ $(t_3,h_1,h_2):f_3=(t_1,t_2,t_3)$.
\end{lem}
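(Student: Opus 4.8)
The plan is to work throughout in the polynomial ring $S'=K[x_1,\dots,x_n,t_1,t_2,t_3]$, which is a UFD and Cohen--Macaulay, and to exploit two inputs: the explicit shape of the minors
\[
h_1=f_2t_3-f_3t_2,\qquad h_2=f_3t_1-f_1t_3,\qquad h_3=f_1t_2-f_2t_1,
\]
together with the two syzygies recorded above, namely $f_1h_1+f_2h_2+f_3h_3=0$ and $t_1h_1+t_2h_2+t_3h_3=0$ (the rows of $M$). Each $h_i$ is homogeneous of $t$-degree $1$, a feature I will use repeatedly. I would prove $(i)$ first and then deduce the three colon identities $(ii)$--$(iv)$, each by a pair of inclusions in which the nontrivial direction follows a single uniform device. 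Since all ideals involved are bihomogeneous, I may take the test element $g$ homogeneous throughout.

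For $(i)$, since $S'$ is a domain $h_1$ is a nonzerodivisor, and since $S'$ is a UFD it suffices to check that $h_1$ and $h_2$ have no common irreducible factor $p$: then $h_2$ avoids every minimal prime $(p_i)$ of $(h_1)$, which by Cohen--Macaulayness are exactly the associated primes of $S'/(h_1)$, so $h_2$ is a nonzerodivisor modulo $h_1$ and $(h_1,h_2)$ is a regular sequence. To rule out a common factor I split on $\deg_t p$. If $\deg_t p=0$ then $p\in K[x]$ divides all coefficients, forcing $p\mid f_1,f_2,f_3$, which is impossible for a regular sequence. If $\deg_t p=1$ then $h_1=pq_1$ and $h_2=pq_2$ with $q_1,q_2\in K[x]$, and comparing the coefficient of $t_1$ in $q_2h_1=q_1h_2$ gives $q_1f_3=0$, hence $q_1=0$, a contradiction.

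For $(ii)$--$(iv)$ the inclusion $\supseteq$ is immediate: the two syzygies give $f_3h_3,\,t_3h_3\in(h_1,h_2)$ (whence $(f_3,t_3)\subseteq(h_1,h_2):h_3$ and $J\subseteq(h_1,h_2):t_3$), and solving $h_1,h_2$ for $f_3t_2,f_3t_1$ gives $(t_1,t_2,t_3)f_3\subseteq(t_3,h_1,h_2)$. For the reverse inclusions I would reduce modulo the ideal claimed to be the answer; in each case that ideal contains the smaller ideal occurring in the colon, so the product collapses to a single nonzerodivisor statement in the quotient. Concretely: for $(ii)$, reduction modulo $(f_3,t_3)$ kills $h_1,h_2$ and turns $gh_3\in(h_1,h_2)$ into $\bar g\,\overline{h_3}=0$ in $K[x,t_1,t_2]/(f_3)$ with $\overline{h_3}=f_1t_2-f_2t_1$; for $(iii)$, reduction modulo $J$ turns $gt_3\in(h_1,h_2)$ into $\bar g\,\bar t_3=0$ in $\Rees(I)=S'/J$; for $(iv)$, reduction modulo $(t_1,t_2,t_3)$ turns $gf_3\in(t_3,h_1,h_2)$ into $\bar g f_3=0$ in $K[x]$. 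In each quotient the relevant multiplier is a nonzerodivisor, so $\bar g=0$, giving $\subseteq$.

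The main obstacle is the nonzerodivisor verification underlying these reverse inclusions, which is where the hypotheses really enter. For $(ii)$ one must show $f_1t_2-f_2t_1$ avoids the associated primes of the Cohen--Macaulay hypersurface $K[x,t_1,t_2]/(f_3)$, i.e. that no irreducible factor $p\mid f_3$ divides $f_1t_2-f_2t_1$; since such $p$ lies in $K[x]$ this would force $p\mid f_1,f_2,f_3$, contradicting that $f_1,f_2,f_3$ is a regular sequence---so this step is exactly what consumes the complete-intersection hypothesis. For $(iii)$ the clean argument rests on the structural fact that $\Rees(I)=S'/J$ is a domain with $t_3\mapsto f_3t\neq 0$; here I would invoke that a complete intersection is of linear type, so that $J$ is prime of height $2$ and $t_3$ is a nonzerodivisor on $S'/J$. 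These two facts are the crux; the remaining inclusions are formal consequences of the two syzygies.
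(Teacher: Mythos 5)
Your proof is correct, but it takes a genuinely different route from the paper for the simple reason that the paper proves nothing here: its entire justification of Lemma \ref{homology computation} is the remark immediately following it, which cites \cite[Lemma $3.1$]{GC-AC} for the case $d=2$ and observes that the argument given there never uses $d=2$, hence holds for all $d$. You instead supply a complete, self-contained proof. Your organizing device is uniform and clean: for each colon identity the inclusion $\supseteq$ is read off from the two syzygy rows of $M$, and for $\subseteq$ you pass to the quotient by the ideal claimed to be the answer (which in each case contains the ideal appearing in the colon), reducing everything to a single nonzerodivisor verification --- the image of $h_3=f_1t_2-f_2t_1$ on $K[x_1,\dots,x_n,t_1,t_2]/(f_3)$ for $(ii)$, the image of $t_3$ on $\Rees(I)=S^{\prime}/J$ for $(iii)$, and $f_3$ on $K[x_1,\dots,x_n]$ for $(iv)$ --- while $(i)$ is handled by a UFD factor analysis graded by $t$-degree. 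All steps check out: the $t$-degree splitting in $(i)$ is airtight; the ``no common irreducible factor'' steps correctly consume the regular-sequence hypothesis (a common factor $p$ of $f_i,f_j$ makes $f_j\cdot(f_i/p)\in(f_i)$ with $f_i/p\notin(f_i)$, so $f_j$ is a zerodivisor mod $f_i$ --- worth one explicit line in a final write-up, since you invoke it repeatedly); and your appeal in $(iii)$ to the linear-type property of complete intersections is legitimate and not circular, since the identification $\Rees(I)=S^{\prime}/J$ is precisely what the paper asserts in its setup before the lemma, and is a classical fact independent of it. What the paper's citation buys is brevity; what your argument buys is self-containedness and a concrete substantiation of the degree-independence claim, which the paper leaves for the reader to check against \cite{GC-AC}.
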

\begin{rem}
Note that Lemma \ref{homology computation} is proved for $d=2$ in \cite[Lemma $3.1$]{GC-AC}. 
We observe that the proof of \cite[Lemma $3.1$]{GC-AC} is independent of the degree of polynomials $d$. 
Hence Lemma \ref{homology computation} also holds for all $d$. 
\end{rem}

We are now ready for the proof of Theorem $\ref{main-thm}$:
\begin{proof} 
Recall that $K[(I^e)_{ed+c}]$ is Koszul for all $c \geq \frac{2d}{3}$ and $e >0$ \cite[Corollary $6.10$]{CHTV}. 
We will show that $K[(I^e)_{ed+c}]$ is Koszul also for $\frac{d}{2} \leq c < \frac{2d}{3}$ and $e >0$. 

We set $B={S^{\prime}}/{(h_1,h_2)}$. By Lemma \ref{homology computation}$(i)$ and Proposition \ref{main-prop}, 
we may conclude that $B_{\bigtriangleup}$ is Koszul. One has also $\frac{B}{h_3B}=\Rees(I)$. It is enough to 
show for the $(c,e)$-diagonal 
subalgebra of Rees$(I)$, one has 
\begin{eqnarray*}
\reg_{B_{\bigtriangleup}}(\Rees(I)_{\bigtriangleup}) \leq 1. 
\end{eqnarray*}
Since $h_3t_3=0$ in $B$, we have a complex
\begin{eqnarray}\label{B-complex}
 {\bf{F:}}\; \cdots \xrightarrow{t_3\hspace*{.4cm}} B(-2d,-3) \xrightarrow{h_3\hspace*{.4cm}} B(-d,-2)
 \xrightarrow{t_3\hspace*{.4cm}} B(-d,-1) 
\xrightarrow{h_3\hspace*{.4cm}} B \longrightarrow 0,
\end{eqnarray}
where $F_0=B,\; F_{2i}=B(-id,-2i),\; F_{2i+1}=B(-(i+1)d,-2i-1)$. 
The homology of ${\bf{F}}$ can be described by using Lemma \ref{homology computation}
\begin{eqnarray*}
H_k({\bf{F}}) = \begin{cases}
    \text{Rees}(I)  &\text{ if  $ k=0 $,}\\
     0  &\text{ if  $k=2i$ and $i >0$,}\\ 
    [{S^{\prime}}/{(t_1,t_2,t_3)}](-(i+1)d-d,-2i-1)  &\text{ if $k=2i+1$ and $i \geq 0$.}
    \end{cases}  
\end{eqnarray*}
The assertion for $k=0$ holds by construction. For $k$ even and positive holds 
because of Lemma \ref{homology computation}.
For $k$ odd and positive by Lemma \ref{homology computation} $(ii)$, we have
\begin{eqnarray*}
H_{2i+1}({\bf{F}})= \frac{(t_3,f_3)}{(t_3,h_1,h_2)} (-(i+1)d,-2i-1).
\end{eqnarray*}
Hence $H_{2i+1}({\bf{F}})$ is cyclic generated by the residue class of 
$f_3 \mod (t_3,h_1,h_2)$ that has degree $(-(i+1)d-d,-2i-1)$. 
Using Lemma \ref{homology computation}$(iv)$ and keeping track of the 
degree we get the desired result. 
Applying $-_{\bigtriangleup}$ functor to $(\ref{B-complex})$, we obtain a complex ${ \bf F_{\bigtriangleup}: }$
\begin{eqnarray}\label{B-delta complex}
  \cdots  \xrightarrow{\hspace*{.5cm}} B(-2d,-3)_{\bigtriangleup}   
 \xrightarrow{\hspace*{.5cm}} B(-d,-2)_{\bigtriangleup} \xrightarrow{\hspace*{.5cm}}
 B(-d,-1)_{\bigtriangleup} \xrightarrow{\hspace*{.5cm}} B_{\bigtriangleup} \xrightarrow{\hspace*{.5cm}} 0,
\end{eqnarray}
where
\begin{eqnarray*}
 (F_{k})_{\bigtriangleup} = \begin{cases}
     B_{\bigtriangleup} &\text{ if $ k=0 $,}\\
     B(-id,-2i)_{\bigtriangleup}  &\text{ if $k=2i$,}\\ 
     B(-(i+1)d,-2i-1)_{\bigtriangleup} &\text{ if $k=2i+1$.}
    \end{cases} 
\end{eqnarray*}
Note that $H_{2i}({\bf{F_{\bigtriangleup}}})=0$ and $H_{0}({\bf{F_{\bigtriangleup}}})=\text{Rees}(I)_{\bigtriangleup}$. 
We observe that $H_{2i+1}({\bf{F_{\bigtriangleup}}})$ is not necessarily zero for all $e \geq 1$. Assume $e \geq 2$. 
Then we claim that $H_{2i+1}({\bf{F_{\bigtriangleup}}})=0$. Take the $j$-th degree component 
\begin{eqnarray*}
(H_{2i+1}({\bf{F}})_{\bigtriangleup})_j= [{S^{\prime}}/{(t_1,t_2,t_3)}]_{ (-(i+1)d-d+jc,-2i-1+je)}. 
\end{eqnarray*}
We will show that 
\begin{eqnarray}\label{extend-eqn}
[{S^{\prime}}/{(t_1,t_2,t_3)}]_{ (-(i+1)d-d+jc,-2i-1+je)}=0. 
\end{eqnarray}
Clearly $(\ref{extend-eqn})$ holds if $-2i-1+ej \geq 1$, that is, if $ej \geq 2(i+1)$. 
So, it is enough to show that $-(i+1)d-d+jc<0$ for $ej < 2(i+1)$, that is, $j< \frac{(i+2)d}{c}$ for 
$j < \frac{2(i+1)}{e}$. This is an easy consequence of the following inequalities:
\begin{eqnarray*}
  \frac{2(i+1)}{e} <  \frac{3(i+2)}{2} < \frac{(i+2)d}{c}.
\end{eqnarray*}
Assume $e=1$. We know that $H_{2i+1}({\bf{F_{\bigtriangleup}}})=H_{2i+1}({\bf{F}})_{\bigtriangleup}$. 
Take the $j$-th degree component of $H_{2i+1}({\bf{F}})_{\bigtriangleup}$, 
then $(H_{2i+1}({\bf{F}})_{\bigtriangleup})_j=0$ if $-(2i+1)+j \geq 1$, that is, if $j \geq 2i+2$. 
So, the largest degree of a non zero component of $H_{2i+1}({\bf{F_{\bigtriangleup}}})$ is at most $2i+1$. 
Therefore by $(\ref{koszul-test-2})$, 
one has $ \reg_{B_{\bigtriangleup}}  H_{k}(  {\bf{F_{\bigtriangleup}} )} \leq \reg_{S^{\prime}} H_{k}(  {\bf{F_{\bigtriangleup}} )}
\leq k$ for all $k \geq 1$. Applying Lemma \ref{regularity-complex-homology} to $(\ref{B-delta complex})$, we obtain
\begin{eqnarray}\label{alpha-beta}
 \reg_{B_{\bigtriangleup}}(\Rees(I)_{\bigtriangleup}) \leq 
\sup \{ \alpha^{\prime}, \beta^{\prime} \}, \text{ where}
\end{eqnarray}
\[
 \alpha^{\prime}=\sup \{ \reg_{B_{\bigtriangleup}}(F_{k})_{\bigtriangleup} -k:\;k\geq 0 \} \text{ and } 
\beta^{\prime}=\sup \{ \reg_{B_{\bigtriangleup}}   H_{k}(  {\bf{F_{\bigtriangleup}} )} -(k+1):\;k\geq 1 \}. 
\]
Since $B$ is defined by a regular sequence of elements of bidegree $(d,1)$, we may apply 
Proposition \ref{main-prop} to $(\ref{B-delta complex})$: 
\begin{eqnarray*}
 \reg_{B_{\bigtriangleup}}(F_k)_{\bigtriangleup} \leq 
\begin{cases}
   \max\{ \lceil \frac{id}{c} \rceil, \lceil \frac{2i}{e} \rceil  \}  &\text{ if  $ k=2i  $,}\\
   \max \{ \lceil \frac{(i+1)d}{c} \rceil,  \lceil \frac{2i+1}{e} \rceil\}  &\text{ if $ k=2i+1$.}
\end{cases}
\end{eqnarray*}
Since $ \frac{3}{2} < \frac{d}{c} \leq 2$, we conclude that $\alpha^{\prime} \leq 1$. Since 
$ \reg_{B_{\bigtriangleup}}  H_{k}(  {\bf{F_{\bigtriangleup}} )}  \leq k$ for all $k \geq 1$, we conclude that  
$\beta^{\prime} \leq -1 $. Therefore by $(\ref{alpha-beta})$, one has
\begin{eqnarray*}
 \reg_{B_{\bigtriangleup}}({\Rees(I)}_{\bigtriangleup}) \leq 1.  
\end{eqnarray*}
Thus we conclude that ${\Rees(I)}_{\bigtriangleup}$ is Koszul.
\end{proof}

\begin{rem}
We observe that in the proof of \cite[Theorem $3.2$]{GC-AC}, $H_{k}({\bf{F}})_{\bigtriangleup}=0$ for 
all $k$, whereas in our case, this is true for all $e\geq 2$, and not for $e=1$. This affects the proof 
of Theorem \ref{main-thm} very much from that of \cite[Theorem $3.2$]{GC-AC}. To achieve our goal 
we first have to deduce Lemma \ref{regularity-complex-homology}. We use the fact that if the homology 
module is non zero and its regularity is bounded by the homology module at zero position, then by 
$(\ref{koszul-test-2})$, Lemma \ref{regularity-complex-homology} and Proposition \ref{main-prop}, 
we conclude the proof.
\end{rem}

\section{More general base rings}
\medskip

In this Section, the two main results that we generalize are
\cite[Theorem $6.2$]{CHTV} and \cite[Corollary $6.10$]{CHTV}. 
We show that the Koszulness property holds even if the assumption of polynomial 
ring is replaced by a Koszul ring.  

Conca et al. in \cite{CHTV} posed two interesting questions at page $900$ 
one of which was positively answered by Aramova, Crona and De Negri \cite{ACN} 
who showed that for an arbitrary bigraded standard algebra $R$, the defining ideal of 
$R_{\bigtriangleup}$ has quadratic Grobner basis for $c,e \gg 0$, and 
another one by Blum \cite{Blum}, who showed that all the diagonal 
algebras of bigraded standard Koszul algebra $R$ are Koszul.
We will use these results in the proof of Theorem \ref{koszul-thm-gen} and 
Theorem \ref{poly to koszul change thm}.

Let $A$ and $B$ are two standard graded Koszul algebras. Let $A=K[A_1]$, where
$A_1=\langle X_1,\dots X_m \rangle$ is a $K$ vector space generated be linear forms 
with $\deg(X_i)=1$. Similarly let $B=K[B_1]$, where $B_1=\langle Y_1,\dots Y_n \rangle$ 
is a $K$ vector space generated be linear forms with $\deg(Y_j)=1$. We set $T=A \tensor_K B$. 
Then $T$ is bigraded standard by setting $\deg(X_i)=(1,0)$ and $\deg(Y_j)=(0,1)$. Let $R$ be a 
bigraded quotient of $T$, that is, $R=T/I$ for some bihomogeneous ideal $I$ of $T$. 

Let $c$ and $e$ be positive integers. We will study the Koszul property of $(c,e)$-diagonal subalgebra 
$R_{\bigtriangleup}$ of bigraded algebra $R$. Consider the bigraded free resolution of $R$ over $T$:
\begin{eqnarray}\label{free res 1}
  \cdots \longrightarrow F_i \longrightarrow \cdots \longrightarrow F_1 \longrightarrow T 
  \longrightarrow R \longrightarrow 0, 
\end{eqnarray}
where 
\begin{eqnarray*}
F_i= \bigoplus_{(a,b) \in \mathbb{N}^2}T(-a,-b)^{\beta_{i,(a,b)}}. 
\end{eqnarray*}
Set 
\begin{eqnarray}\label{ext-eqn3}
 t_{i,1}=\max \{a : \; \exists \;b \;\text{  s.t. } \; \beta_{i,(a,b)} \neq 0 \}, \text{ and  } 
 t_{i,2}=\max \{b : \; \exists \;a \;\text{  s.t. } \; \beta_{i,(a,b)} \neq 0 \}.
\end{eqnarray}
With this notation, the following is a generalization of \cite[Theorem $6.2$]{CHTV}:
\begin{thm}\label{koszul-thm-gen} 
Let $A$ and $B$ are standard graded Koszul algebras. We set $T=A \tensor_K B$ and 
$R$ a bi-graded quotient of $T$. Then:
\begin{itemize}
 \item[(i)] $T$ is Koszul and $\reg_{T} R $ is finite.
 \item[(ii)] If $c \geq  \sup \{ \frac{t_{i,1}}{i+1}:\;  i \geq 1 \} \in \mathbb{R}$ and 
 $e \geq  \sup \{ \frac{t_{i,2}}{i+1}:\;  i \geq 1 \} \in \mathbb{R} $, then $R_{\bigtriangleup}$ is Koszul.
 \item[(iii)] In particular, if $ c \geq \frac{\reg_{T} R -1}{2}$ and $ e \geq \frac{\reg_{T} R -1}{2}$, 
 then $R_{\bigtriangleup}$ is Koszul. 
\end{itemize}
\end{thm}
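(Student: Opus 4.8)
The plan is to run the argument of \cite[Theorem $6.2$]{CHTV} with the bigraded polynomial ring replaced by $T=A\otimes_K B$, the one new ingredient being that Veronese modules over a Koszul algebra have linear resolutions. Part (i) is immediate: $T=A\otimes_K B$ is Koszul because Koszulness is preserved under tensor products \cite{BF}, and since $R$ is a finitely generated graded module over the Koszul algebra $T$, its regularity $\reg_T R$ is finite by \cite{AE} (see the discussion around $(\ref{koszul-test-2})$).

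For the core of (ii) I would first record that the diagonal of $T$ is itself a Segre product of Veronese rings: from $T_{(i,j)}=A_i\otimes_K B_j$ one has
\[
T_{\bigtriangleup}=\bigoplus_{s}A_{cs}\otimes_K B_{es}=A^{(c)} \underline{\otimes} B^{(e)},
\]
which is Koszul since $A^{(c)}$ and $B^{(e)}$ are Koszul and the Segre product of Koszul algebras is Koszul \cite{BF}. The technical heart is the estimate
\[
\reg_{T_{\bigtriangleup}}T(-a,-b)_{\bigtriangleup}\le \max\{\lceil \tfrac{a}{c}\rceil,\ \lceil \tfrac{b}{e}\rceil\},
\]
which I would prove by identifying $T(-a,-b)_{\bigtriangleup}=\bigoplus_{s}A_{cs-a}\otimes_K B_{es-b}$ with the Segre product $M \underline{\otimes} N$, where $M=\bigoplus_{s}A_{cs-a}=V_A(c,p)(-\lceil a/c\rceil)$ with $p=c\lceil a/c\rceil-a\in[0,c)$ and $N=\bigoplus_{s}B_{es-b}=V_B(e,q)(-\lceil b/e\rceil)$ with $q=e\lceil b/e\rceil-b\in[0,e)$. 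The key input is that the Veronese modules $V_A(c,p)$ and $V_B(e,q)$ of the Koszul algebras $A$ and $B$ have linear resolutions over $A^{(c)}$ and $B^{(e)}$; granting this, $M$ and $N$ have linear resolutions with $\reg_{A^{(c)}}M=\lceil a/c\rceil$ and $\reg_{B^{(e)}}N=\lceil b/e\rceil$, and $(\ref{regularity relation segre product})$ yields the displayed bound whenever $T(-a,-b)_{\bigtriangleup}\ne0$ (the bound being vacuous otherwise).

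With this estimate in hand the assembly is homological. I would take the minimal bigraded free resolution $(\ref{free res 1})$ of $R$ over $T$ and apply the exact functor $-_{\bigtriangleup}$, obtaining an exact complex ${\bf F}_{\bigtriangleup}$ of $T_{\bigtriangleup}$-modules with $H_0=R_{\bigtriangleup}$ and $H_j=0$ for $j\ge1$, whose terms are $(F_i)_{\bigtriangleup}=\bigoplus_{(a,b)}T(-a,-b)_{\bigtriangleup}^{\beta_{i,(a,b)}}$. Lemma \ref{regularity-complex-homology}, whose $\beta'$-term vanishes here because the complex has no higher homology, then gives
\[
\reg_{T_{\bigtriangleup}}R_{\bigtriangleup}\le \sup_{i\ge0}\{\reg_{T_{\bigtriangleup}}(F_i)_{\bigtriangleup}-i\}.
\]
By the previous estimate together with $(\ref{ext-eqn3})$ one has $\reg_{T_{\bigtriangleup}}(F_i)_{\bigtriangleup}\le\max\{\lceil t_{i,1}/c\rceil,\lceil t_{i,2}/e\rceil\}$, so the hypotheses $c\ge\sup_{i\ge1}t_{i,1}/(i+1)$ and $e\ge\sup_{i\ge1}t_{i,2}/(i+1)$ force $\reg_{T_{\bigtriangleup}}(F_i)_{\bigtriangleup}\le i+1$ for $i\ge1$, while the $i=0$ term contributes $0$. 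Hence $\reg_{T_{\bigtriangleup}}R_{\bigtriangleup}\le1$, and since $T_{\bigtriangleup}$ is Koszul and $R_{\bigtriangleup}=T_{\bigtriangleup}/I_{\bigtriangleup}$, \cite[Lemma $2.1(3)$]{GC-AC} lets me conclude that $R_{\bigtriangleup}$ is Koszul. Part (iii) follows from (ii) via the estimate $t_{i,1},t_{i,2}\le t_i^{T}(R)\le\reg_T R+i$, which bounds the suprema in the hypothesis of (ii) in terms of $\reg_T R$ alone and gives the stated bound on $c$ and $e$.

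The step I expect to be the main obstacle is the linearity of the Veronese modules $V_A(c,p)$ and $V_B(e,q)$ over the Veronese subalgebras, since this is the one place where the Koszulness of $A$ and $B$ is genuinely used in place of their being polynomial rings as in \cite{CHTV}. Once it is available, the remainder is bookkeeping for the exact functor $-_{\bigtriangleup}$, combined with the Segre regularity formula $(\ref{regularity relation segre product})$ and the homological estimate of Lemma \ref{regularity-complex-homology}, none of which distinguishes a polynomial base from a Koszul one.
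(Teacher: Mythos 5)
Your parts (i) and (ii) are correct and essentially identical to the paper's proof: the paper also identifies $T_{\bigtriangleup}=A^{(c)}\underline{\otimes}B^{(e)}$ (citing \cite[Theorem $2.1$]{Blum} for its Koszulness), decomposes $T(-a,-b)_{\bigtriangleup}$ as a Segre product of shifted Veronese modules, and deduces $\reg_{T_{\bigtriangleup}}T(-a,-b)_{\bigtriangleup}=\max\{\lceil\frac{a}{c}\rceil,\lceil\frac{b}{e}\rceil\}$ from $(\ref{regularity relation segre product})$. The input you single out as the main obstacle --- linearity of the Veronese modules over $A^{(c)}$ and $B^{(e)}$ when $A$ and $B$ are Koszul --- is exactly \cite[Lemma $5.1$]{BCR}, which is the reference the paper invokes; and your use of Lemma \ref{regularity-complex-homology} in place of the paper's appeal to \cite[Lemma $6.3$(ii)]{CHTV} is immaterial, since the diagonalized resolution is exact.

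Part (iii) is where your proposal has a genuine gap, and it is worth being precise because the gap exposes an error in the paper itself. Your inequality $t_{i,1},t_{i,2}\le t_i^{T}(R)\le\reg_T R+i$ is the correct one, but it does \emph{not} give the stated bound: it gives
\[
\sup\Bigl\{\tfrac{t_{i,1}}{i+1}:\;i\ge1\Bigr\}\;\le\;\sup\Bigl\{\tfrac{\reg_T R+i}{i+1}:\;i\ge1\Bigr\}\;=\;\tfrac{\reg_T R+1}{2}\qquad(\text{for }\reg_T R\ge 1),
\]
so what follows from (ii) is Koszulness for $c,e\ge\frac{\reg_T R+1}{2}$, one more than claimed. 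The paper reaches $\frac{\reg_T R-1}{2}$ only via the inequality $t_i\le\reg_T R-i$, which is a sign error: by definition $\reg_T R=\sup\{t_i-i\}$, hence $t_i\le\reg_T R+i$. Moreover, statement (iii) as printed is false. Take $A=K[x]$, $B=K[y]$, so $T=K[x,y]$, and $R=T/(x^4)$; from $0\to T(-4,0)\to T\to R\to 0$ one gets $\reg_T R=3$, so (iii) would permit $c=e=1$, yet $R_{\bigtriangleup}=\bigoplus_{s}Kx^{s}y^{s}\cong K[z]/(z^4)$ is not even quadratic. Your threshold $\frac{\reg_T R+1}{2}=2$ correctly excludes this case (for $c=2$, $e=1$ one gets $K[z]/(z^2)$, which is Koszul), and (ii) also correctly requires $c\ge t_{1,1}/2=2$ here. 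So your argument is sound up to the last sentence, but that sentence --- and part (iii) itself --- should be corrected to the bound $\frac{\reg_T R+1}{2}$.
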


\begin{proof}
For (i) $T$ is Koszul by \cite{BF} and $\reg_{T} R $ is finite by \cite[Theorem $1$]{AE}. For (ii) 
applying $-_{\bigtriangleup}$ functor to bigraded free resolution $(\ref{free res 1})$ of 
$R$ over $T$, one obtains an exact complex
\begin{eqnarray}\label{rees free res-delta}
  \cdots \longrightarrow (F_i)_{\bigtriangleup} \longrightarrow \cdots \longrightarrow (F_1)_{\bigtriangleup} 
  \longrightarrow T_{\bigtriangleup} \longrightarrow R_{\bigtriangleup} \longrightarrow 0, 
\end{eqnarray}
 of $T_{\bigtriangleup}$-modules, where 
\begin{eqnarray*}
 (F_i)_{\bigtriangleup}= \bigoplus_{(a,b) \in \mathbb{N}^2}T(-a,-b)_{\bigtriangleup}^{\beta_{i,(a,b)}}.
\end{eqnarray*}
Note that $T_{\bigtriangleup}= A^{(c)} \underline{\otimes} B^{(e)}$, where $A^{(c)}$ denotes the c-th Veronese 
subring of $A$, and $B^{(e)}$ denotes the e-th Veronese subring of $B$. Note that $T_{\bigtriangleup}$ is Koszul 
\cite[Theorem $2.1$]{Blum}. To show that $R_{\bigtriangleup}$ is Koszul,  it is enough to show that
\[
   \reg_{ T_{\bigtriangleup}}{R_{\bigtriangleup}} \leq 1.
\]
Applying \cite[Lemma $6.3$(ii)]{CHTV} to $(\ref{rees free res-delta})$, we get 
\begin{eqnarray}
 \reg_{ T_{\bigtriangleup}}{R_{\bigtriangleup}} \leq \sup 
\{  \reg_{ T_{\bigtriangleup}}{({F_i})}_{\bigtriangleup}-i \; :\; i \geq 1 \}. 
\end{eqnarray}
Thus, it is enough to show that
\begin{eqnarray}\label{ext-eqn5}
 \reg_{ T_{\bigtriangleup}}{({F_i})}_{\bigtriangleup} -i \leq 1 \text{ for all $i \geq 1$.}
\end{eqnarray}
Since 
\[
 (F_i)_{\bigtriangleup}= \bigoplus_{(a,b) \in \mathbb{N}^2}T(-a,-b)_{\bigtriangleup}^{\beta_{i,(a,b)}},
\]
one has 
\begin{eqnarray}\label{main-eqn-1-gen}
  \reg_{ T_{\bigtriangleup}}{({F_i})}_{\bigtriangleup} =  
  \max\{ \reg_{T_{\bigtriangleup}}T(-a,-b)_{\bigtriangleup} \;:\;\beta_{i,(a,b)} \neq 0 \}.
\end{eqnarray}
Now we need to evaluate $\reg_{T_{\bigtriangleup}}T(-a,-b)_{\bigtriangleup} $. 
We denote by $V_A(c,\alpha)$, the Veronese modules of $A$, that is, 
$V_A(c,\alpha) =\bigoplus_{s \in \mathbb{N}}A_{sc+\alpha}$ for $\alpha=0,\dots,c-1$. 
Similarly denote $V_B(e,\beta)$, the Veronese modules of $B$. 

Hence for the shifted module $T(-a,-b)_{\bigtriangleup}$, we can write
\begin{eqnarray*}\label{ext-eqn4}
 T(-a,-b)_{\bigtriangleup} =\; \bigoplus_{s} \; [ A_{sc-a}\; \tensor \; B_{se-b} ] \;=\;
 V_A(c,\alpha) (-\lceil \dfrac{a}{c}  \rceil)\; \underline{\otimes} \;
 V_B(e,\beta) (-\lceil \dfrac{b}{e}  \rceil),
\end{eqnarray*}
where $\alpha=-a \mod (c),\; 0 \leq \alpha \leq c-1$ and
$\beta=-b \mod (e),\; 0 \leq \beta \leq e-1$. 

The Veronese modules $V_A(c,\alpha)$ and $V_B(e,\beta)$ have a linear resolutions as a 
$A^{(c)}$-module and $B^{(c)}$-module respectively, see \cite[Lemma $5.1$]{BCR}. Hence by 
$(\ref{regularity relation segre product})$, one has 
\begin{eqnarray}\label{regularity relation segre product-main-thm}
\reg_{T_{\bigtriangleup}}   {T(-a,-b)_{\bigtriangleup}}= 
\max \{  \lceil \dfrac{a}{c}  \rceil, 
\lceil \dfrac{b}{e}  \rceil   \}. 
\end{eqnarray}
Thus by $(\ref{ext-eqn3})$, $(\ref{ext-eqn5})$, $(\ref{main-eqn-1-gen})$ and 
$(\ref{regularity relation segre product-main-thm})$, we conclude that $R_{\bigtriangleup}$ is Koszul 
provided that 
\begin{eqnarray}\label{regularity relation-main-thm}
 \max \{ \lceil \dfrac{t_{i,1}}{c}  \rceil, \;
\lceil \dfrac{t_{i,2}}{e}  \rceil  \} \leq i+1 \text{ for all $i\geq 1$.}
\end{eqnarray}
From $(\ref{regularity relation-main-thm})$, we conclude that if 
$c \geq  \sup \{ \frac{t_{i,1}}{i+1}:\;  i \geq 1 \} $ and 
$e \geq  \sup \{ \frac{t_{i,2}}{i+1}:\;  i \geq 1 \} $, then $R_{\bigtriangleup}$ is Koszul. 
By definition, one has $ t_{i,1} \leq t_i \leq \reg_T R -i$. Thus we have:
\begin{eqnarray}\label{regularity relation-main-thm-11}
  \dfrac{t_{i,1}}{i+1} \leq   \dfrac{t_{i}}{i+1} \leq   \dfrac{\reg_T R -i}{i+1}. 
\end{eqnarray}
We know that $\reg_T R $ is finite. Notice that $\dfrac{\reg_T R -i}{i+1}$ is a 
decreasing function of $i$, as $i$ vary over the natural numbers. Taking sup in $(\ref{regularity relation-main-thm-11})$, we get
\[
 \sup \{ \frac{t_{i,1}}{i+1}:\;  i \geq 1 \} \leq  \sup \{ \dfrac{t_{i}}{i+1} :\;  i \geq 1  \}  \leq  
 \sup \{ \dfrac{\reg_T R -i}{i+1} :\;  i \geq 1 \}. 
\]
Similarly the other case
\[
 \sup \{ \frac{t_{i,2}}{i+1}:\;  i \geq 1 \} \leq  \sup \{ \dfrac{t_{i}}{i+1} :\;  i \geq 1  \}  \leq  
 \sup \{ \dfrac{\reg_T R -i}{i+1}  :\;  i \geq 1 \}.
\]
Note that 
\[
 \sup \{ \dfrac{\reg_T R -i}{i+1} :\;  i \geq 1 \} \leq \dfrac{\reg_T R -1}{2}.
\]
Thus we observe that the numbers $\sup \{ \frac{t_{i,1}}{i+1}:\;  i \geq 1 \}$ and 
$\sup \{ \frac{t_{i,2}}{i+1} :\;  i \geq 1 \}$ are in fact some finite real numbers, bounded by 
$\dfrac{\reg_T R -1}{2}$. Thus the claim (ii) and (iii) follows.

\end{proof}

\begin{rem}
 Note that in the Theorem \ref{koszul-thm-gen}, if we take $A=K[x_1,\cdots,x_m]$ and 
 $B=K[y_1,\cdots,y_n]$, then we will re-obtain \cite[Theorem $6.2$]{CHTV}.
\end{rem}

The following is the generalization of \cite[Corollary $6.9$]{CHTV}:

\begin{cor}\label{koszul cor}
Let $I$ be a homogeneous ideal of a standard graded Koszul ring $A$. 
Let $d$ denotes the highest degree of a minimal generator of $I$. 
Then there exists integers $c_0,e_0$ such that the $K$-algebra $K[(I^e)_{ed+c}]$ 
is Koszul for all $c \geq c_0$ and $e \geq e_0$.
\end{cor}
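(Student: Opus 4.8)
The plan is to realize $K[(I^e)_{ed+c}]$ as the $(c,e)$-diagonal subalgebra of the Rees algebra $\Rees(I)$, presented as a bigraded quotient of a tensor product of Koszul algebras, and then feed this into Theorem \ref{koszul-thm-gen}(iii). The one genuine preliminary point is that $I$ need not be generated in a single degree, whereas a standard bigraded structure on $\Rees(I)$ with $\deg(g_jt)=(0,1)$ forces all generators of $I$ to have the same degree. So the first move is a reduction. I would replace $I$ by the ideal $L=(I_d)$ generated by the degree-$d$ component of $I$. Since $A$ is standard graded we have $A_pA_q=A_{p+q}$ for all $p,q\ge 0$, whence $L_n=I_n$ for every $n\ge d$. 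Distributing a factor from $A_{\lvert\beta\rvert}$ among the $e$ factors of a generator $x^{\beta}u_1\cdots u_e$ of $(I^e)_{ed+c}$ (the total shortfall $\sum_i(d-\deg u_i)$ is at most $\lvert\beta\rvert$ because the total degree is $ed+c\ge ed$) shows each factor may be taken of degree $\ge d$, hence in $L$. This gives $(I^e)_{ed+c}\subseteq L^e$, and the reverse inclusion is clear, so $(I^e)_{ed+c}=(L^e)_{ed+c}$ and $K[(I^e)_{ed+c}]=K[(L^e)_{ed+c}]$ for all $e\ge 1$, $c\ge 0$. Thus I may assume $I=(g_1,\dots,g_r)$ with $\deg g_j=d$ for all $j$.

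Next I would set up the bigraded presentation. Let $B=K[y_1,\dots,y_r]$ with $\deg y_j=1$; this is a polynomial ring, hence Koszul. Put $T=A\tensor_K B$, bigraded by $\deg a=(\deg a,0)$ for $a\in A$ and $\deg y_j=(0,1)$. Then $T$ is standard bigraded, and being a tensor product of the Koszul algebras $A$ and $B$ it is Koszul by \cite{BF}. The $K$-algebra surjection $T\longrightarrow \Rees(I)$ sending $y_j\mapsto g_jt$ has a \emph{bihomogeneous} kernel precisely because all $g_j$ share the single degree $d$ (a homogeneous syzygy $\sum_j h_jg_j=0$ of degree $D$ has $\deg h_j=D-d$ uniformly, so the corresponding relation $\sum_j h_jy_j$ is bihomogeneous of bidegree $(D-d,1)$). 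Hence $R:=\Rees(I)=T/\mathfrak{J}$ is a bigraded quotient of $T$, and the hypotheses of Theorem \ref{koszul-thm-gen} are satisfied.

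It then remains to identify the diagonal and apply the theorem. A monomial $x^{\alpha}y_{\ell_1}\cdots y_{\ell_j}$ of bidegree $(i,j)$ maps to $x^{\alpha}g_{\ell_1}\cdots g_{\ell_j}t^{j}\in (I^{j})_{i+jd}t^{j}$, so $R_{(i,j)}=(I^{j})_{i+jd}t^{j}$ and therefore $R_{\bigtriangleup}=\bigoplus_{s}(I^{es})_{s(ed+c)}t^{es}$. Since a diagonal subalgebra is standard graded, $R_{\bigtriangleup}=K[R_{(c,e)}]$, and the degree-forgetting isomorphism $\Rees(I)\cong\bigoplus_j I^{j}$ carries $R_{(c,e)}=(I^{e})_{ed+c}t^{e}$ onto $(I^{e})_{ed+c}$; thus $R_{\bigtriangleup}\cong K[(I^{e})_{ed+c}]$. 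By Theorem \ref{koszul-thm-gen}(i) the number $\reg_{T}R$ is finite, and by part (iii), $R_{\bigtriangleup}$ is Koszul whenever $c\ge\frac{\reg_{T}R-1}{2}$ and $e\ge\frac{\reg_{T}R-1}{2}$. Taking $c_0=e_0=\max\{1,\lceil\tfrac{\reg_{T}R-1}{2}\rceil\}$ then yields that $K[(I^{e})_{ed+c}]$ is Koszul for all $c\ge c_0$ and $e\ge e_0$.

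The steps involving Theorem \ref{koszul-thm-gen} and the tensor product are essentially formal once the setup is in place; the part requiring the most care is the reduction to equigenerated $I$, together with the verification that the Rees kernel is bihomogeneous, since this is exactly what legitimizes the tensor-product presentation $T=A\tensor_K B$ and hence the applicability of the theorem.
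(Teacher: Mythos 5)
Your proposal is correct and follows essentially the same route as the paper: reduce to the equigenerated ideal $(I_d)$, present $\Rees(I)$ as a bigraded quotient of $T=A\tensor_K B$ with $B$ a polynomial ring in new variables of bidegree $(0,1)$, and invoke Theorem \ref{koszul-thm-gen}(i) and (iii) to get Koszulness of the diagonal $\Rees(I)_{\bigtriangleup}=K[(I^e)_{ed+c}]$ once $c,e\ge\frac{\reg_T\Rees(I)-1}{2}$. The only difference is that you spell out the justifications (that $(I^e)_{ed+c}=((I_d)^e)_{ed+c}$ via standard-gradedness, and that the kernel of $T\to\Rees(I)$ is bihomogeneous) which the paper leaves implicit.
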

\begin{proof}
Let $B=K[t_1,\dots,t_k]$ in Theorem \ref{koszul-thm-gen}. Then $T=A[t_1,\dots,t_k]$ is 
a polynomial extension of $A$. Note that $T$ is a bigraded standard algebra by 
setting $\deg X_i=(1,0)$ and $\deg t_j=(0,1)$. By replacing $I$ with the 
ideal generated by $I_d$, we may assume that $I$ is generated by forms of degree $d$. 
Then Rees$(I) \subset A[t]$ is a bigraded standard algebra by setting $\deg X_i=(1,0)$ 
and $\deg ft=(0,1)$ for all $f \in I_d$. Moreover Rees$(I)$ can also be realized as 
the bigraded quotient of $T$. 

By Theorem \ref{koszul-thm-gen}$(i)$, we have $T$ is Koszul and $\reg_{T} \text{Rees$(I)$} $ is finite. 
Note that the numbers $c$ and $e$ exist from Theorem \ref{koszul-thm-gen}$(ii)$ such 
that Rees$(I)_{\bigtriangleup}$ is Koszul. In particular, if $c,e \geq \frac{\reg_{T} 
\text{Rees$(I)$} -1}{2}$, then Rees$(I)_{\bigtriangleup}$ is Koszul by 
Theorem \ref{koszul-thm-gen}$(iii)$. Thus the claim follows, 
since Rees$(I)_{\bigtriangleup}=K[(I^e)_{ed+c}]$. 
\end{proof}
 
\begin{rem}
In the Corollary \ref{koszul cor}, the integers $c_0$ and $e_0$ can be computed explicitly whenever 
 one knows the shifts in the bigraded free resolution of Rees$(I)$ over the Koszul ring $T$. 
 For instance, when $I$ is a complete intersection ideal generated by homogeneous forms of degree $d$, 
 one has Theorem \ref{poly to koszul change thm}.
\end{rem}
The following is the generalization of \cite[Corollary $6.10$]{CHTV}:

\begin{thm}\label{poly to koszul change thm}
Let $A$ be a standard graded Koszul ring. Let $I$ be an ideal of $A$ generated by a 
regular sequence $f_1,f_2, \dots,f_k$, of homogeneous forms of degree $d$. 
Then $K[(I^e)_{ed+c}]$ is Koszul for all $c \geq \frac{d(k-1)}{k}$ and $e >0$.
\end{thm}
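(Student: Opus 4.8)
The plan is to realize $\Rees(I)$ as a bigraded quotient of a tensor product of two Koszul algebras and then feed the bigraded Betti numbers of $\Rees(I)$ into Theorem \ref{koszul-thm-gen}(ii). First I would set $B=K[t_1,\dots,t_k]$ and $T=A\otimes_K B=A[t_1,\dots,t_k]$, bigraded by $\deg X_i=(1,0)$ and $\deg t_j=(0,1)$; since $A$ is Koszul and $B$ is a polynomial ring, hence Koszul, Theorem \ref{koszul-thm-gen}(i) applies and gives $T$ Koszul with $\reg_T\Rees(I)$ finite. Because $f_1,\dots,f_k$ is a regular sequence, $I$ is of linear type, so $\Rees(I)=\mathrm{Sym}_A(I)=T/J$, where $J=I_2(M)$ is generated by the $2$-minors $h_{ij}=f_it_j-f_jt_i$ of
\[
M=\begin{pmatrix} f_1 & \cdots & f_k \\ t_1 & \cdots & t_k \end{pmatrix},
\]
each of bidegree $(d,1)$.

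Next I would identify the bigraded minimal free resolution of $\Rees(I)$ over $T$. Since $\operatorname{codim}J=k-1$ and the regular-sequence hypothesis forces $\grade J=k-1$, the Eagon--Northcott complex of $M$ is acyclic and minimal, hence it is the resolution of $T/J$. I would then record the bidegrees of its generators: assigning $\deg f_i=(d,0)$ and $\deg t_j=(0,1)$, a direct bookkeeping on the terms $\wedge^{j+1}F\otimes D_{j-1}(G^*)\otimes\wedge^2 G^*$ shows that at homological degree $j$ (with $1\leq j\leq k-1$) the free generators sit in bidegrees $((j-b)d,\,1+b)$ for $b=0,1,\dots,j-1$. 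In the notation of $(\ref{ext-eqn3})$ this yields $t_{j,1}=jd$ and $t_{j,2}=j$ for $1\leq j\leq k-1$. The point to emphasize is that these shifts are independent of the ambient ring $A$: only the fact that the $f_i$ form a regular sequence of degree-$d$ forms enters, so they agree with the polynomial-ring computation underlying \cite[Corollary $6.10$]{CHTV}, and the Koszulness of $A$ is used solely through Theorem \ref{koszul-thm-gen}.

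Finally I would evaluate the suprema in Theorem \ref{koszul-thm-gen}(ii). As $j\mapsto \frac{jd}{j+1}$ and $j\mapsto\frac{j}{j+1}$ are increasing and $j$ ranges over the finite set $1\leq j\leq k-1$, one gets
\[
\sup_{j\geq 1}\frac{t_{j,1}}{j+1}=\frac{(k-1)d}{k}=\frac{d(k-1)}{k},\qquad \sup_{j\geq 1}\frac{t_{j,2}}{j+1}=\frac{k-1}{k}<1.
\]
Hence for $c\geq\frac{d(k-1)}{k}$ and $e\geq 1>\frac{k-1}{k}$ (recall $e>0$ is an integer) both hypotheses of Theorem \ref{koszul-thm-gen}(ii) are satisfied, and I conclude that $\Rees(I)_{\bigtriangleup}=K[(I^e)_{ed+c}]$ is Koszul.

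I expect the main obstacle to be the justification that the Eagon--Northcott complex is still the (minimal, acyclic) resolution of $\Rees(I)$ over the merely Koszul, possibly non--Cohen--Macaulay ring $A$, i.e.\ verifying $\grade J=k-1$ in $T$ without appealing to Cohen--Macaulayness. This should follow from the regular-sequence hypothesis on $f_1,\dots,f_k$: localizing away from any single $f_i$ trivializes the relations $h_{ij}$ down to $k-1$ variable-like forms, while along the common vanishing locus the $f_i$ supply grade $k$; one then invokes the Buchsbaum--Eisenbud acyclicity criterion. Everything else is the routine bidegree bookkeeping above together with the transfer supplied by Theorem \ref{koszul-thm-gen}.
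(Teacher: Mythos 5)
Your proposal is correct and follows essentially the same route as the paper: realize $\Rees(I)$ as the determinantal quotient $T/I_2(M)$ of $T=A[t_1,\dots,t_k]$, resolve it by the Eagon--Northcott complex (acyclic and minimal since $\grade I_2(M)=k-1$), and read off the shifts $t_{j,1}=jd$ and $t_{j,2}=j$. The only cosmetic difference is that you feed these shifts directly into the statement of Theorem \ref{koszul-thm-gen}(ii), whereas the paper replays that theorem's proof inline (truncated complex, Lemma \ref{regularity-complex-homology}, and the Segre-product regularity formula), reaching the same bound $c\geq\frac{d(k-1)}{k}$, $e>0$.
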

\begin{proof}
Let $A=K[A_1]$, where $A_1=\langle X_1,\dots X_m \rangle$ is a $K$ vector space generated be linear forms 
with $\deg(X_i)=1$. Let $A^{\prime}=A[t_1,\dots,t_k]$ be a polynomial extension of $A$. 
Then $A^{\prime}$ is a bigraded standard algebra by setting $\deg X_i=(1,0)$ and 
$\deg t_j=(0,1)$. Let $I$ be an ideal of $A$ generated by a 
regular sequence $f_1,f_2, \dots,f_k$, of homogeneous forms of degree $d$. Then 
Rees$(I)  \subset A[t]$ is a bigraded standard algebra by setting $\deg X_i=(1,0)$ and 
$\deg f_jt=(0,1)$. Let 
\[
 \phi : A[t_1,t_2,\dots,t_k] \longmapsto A[It]
\]
be the surjective map by sending $t_j$ to $f_jt$. Since $I$ is a complete intersection ideal, one has
\[
 \ker(\phi)= I_2
\begin{pmatrix}
 f_1 & f_2 & \dots & f_k \\
t_1 & t_2 & \dots & t_k
\end{pmatrix}.
\]
The resolution of $A[It]$ over $A^{\prime}$ is given by the Eagon-Northcott complex $(\ref{ENcomplex})$. 
We know that $\ker(\phi)$ is a determinantal ideal and $\grade(\ker(\phi))=k-1$, hence the Eagon-Northcott 
complex $(\ref{ENcomplex})$ is the minimal free resolution of $A[It]$ over $A^{\prime}$:
\begin{eqnarray}\label{ENcomplex}
 0 \longrightarrow F_{k-1} \longrightarrow \cdots \longrightarrow F_1 \longrightarrow 
F_0 \longrightarrow A[It] \longrightarrow 0, 
\end{eqnarray}
where 
\[
 F_i= \bigoplus_{j=1}^{i} A^{\prime}(-jd,-i-1+j)^{\sharp_i}.
\]
Here $\sharp_i$ denotes some integer which is irrelevant in our discussion. 
Applying $-_{\bigtriangleup}$ functor to $(\ref{ENcomplex})$, one obtains an exact complex:
\begin{eqnarray}\label{delta-ENcomplex}
 0 \longrightarrow (F_{k-1})_{\bigtriangleup} \longrightarrow \cdots \longrightarrow (F_1)_{\bigtriangleup} 
\longrightarrow (F_0)_{\bigtriangleup} \longrightarrow A[It]_{\bigtriangleup} \longrightarrow 0
\end{eqnarray}
of $A^{\prime}_{\bigtriangleup}$-modules, where
\[
(F_i)_{\bigtriangleup}= \bigoplus_{j=1}^{i} A^{\prime}(-jd,-i-1+j)_{\bigtriangleup}^{\sharp_i}. 
\]
From the exact complex  $(\ref{delta-ENcomplex})$, we build another complex: 
\[ 
{\bf{F}} :\;\; 0 \longrightarrow (F_{k-1})_{\bigtriangleup} \longrightarrow \cdots \longrightarrow (F_1)_{\bigtriangleup} 
\longrightarrow (F_0)_{\bigtriangleup} \longrightarrow  0
\]
of $A^{\prime}_{\bigtriangleup}$-modules. Then the homology of the new complex ${\bf{F}}$ is given by
\[
 H_i({\bf{F}}) = 
\begin{cases}
   A[It]_{\bigtriangleup} ,  &\text{ if $i=0$;}\\
    0,  &\text{ if $ i> 0$.}
\end{cases}
\]
Thus Lemma \ref{regularity-complex-homology} 
applied to the complex ${\bf{F}}$, one has
\[
 \reg_{ A^{\prime}_{\bigtriangleup}}{(A[It]_{\bigtriangleup})} \leq \sup 
\{  \reg_{ A^{\prime}_{\bigtriangleup}}{({F_i})}_{\bigtriangleup}-i \;:\; i=1,2,\dots,{k-1} \}. 
\]
Note that $A_{\bigtriangleup}^{\prime}$ is Koszul \cite[Theorem $2.1$]{Blum}. 
To show that $A[It]_{\bigtriangleup}$ is Koszul, it is enough to show 
\begin{eqnarray}\label{main-eqn}
\reg_{ A^{\prime}_{\bigtriangleup}}{({F_i})}_{\bigtriangleup}-i \leq 1 
\text{ for all $i=1,2,\dots,{k-1}$.} 
\end{eqnarray}
One has
\begin{eqnarray*}
 \reg_{ A^{\prime}_{\bigtriangleup}}{({F_i})_{\bigtriangleup}}=
\max\{ \reg_{A_{\bigtriangleup}^{\prime}}A^{\prime}(-jd,-i-1+j)_{\bigtriangleup} \; :\; j=1,2,\dots,i \}.
\end{eqnarray*}
By similar argument as used in Theorem \ref{koszul-thm-gen} to obtain 
Equation $(\ref{regularity relation segre product-main-thm})$, we get 
\begin{eqnarray*}
 \reg_{A_{\bigtriangleup}^{\prime}}{({F_i})_{\bigtriangleup}}= \max \{\; \lceil \dfrac{jd}{c}  \rceil, \;
\lceil \dfrac{i+1-j}{e}  \rceil \;:\; j=1,2,\dots,i \} . 
\end{eqnarray*}
Thus we have
\begin{eqnarray}\label{main-eqn-2}
\reg_{A_{\bigtriangleup}^{\prime}}{({F_i})_{\bigtriangleup}}= \max \{\; \lceil \dfrac{id}{c}  \rceil,\; 
\lceil \dfrac{i}{e}  \rceil \; \} . 
\end{eqnarray}
Therefore by (\ref{main-eqn}) and (\ref{main-eqn-2}), we conclude that $K[(I^e)_{ed+c}]$ is Koszul 
if $c \geq \frac{d(k-1)}{k}$ and $e >0$.
\end{proof}

We conclude the section with one final remark, and with an open question: 
\begin{rem}
The claim by Conca, Herzog, Trung and Valla in \cite[p. 900]{CHTV} together with 
Theorem \ref{main-thm} and Theorem \ref{poly to koszul change thm} in this paper, suggest that the 
following question may have a positive answer.
\end{rem}

\begin{quest}
Let $I$ be an ideal of a Koszul ring $A$ generated by a regular sequence 
 $f_1,f_2, \dots,f_k$, of homogeneous forms of degree $d$. Is it true that 
 $K[(I^e)_{ed+c}]$ is Koszul for all $c \geq \frac{d}{2}$ and $e >0$.
\end{quest}

\end{document}